\newtheorem{thm}{Theorem}[section]
\newtheorem{prop}[thm]{Proposition}
\newtheorem{lem}[thm]{Lemma}
\newtheorem{cor}[thm]{Corollary}
\theoremstyle{definition}
\theoremstyle{remark}
\newtheorem{rem}[thm]{Remark}
\numberwithin{equation}{section}
\title{Iterated fibre sums of algebraic Lefschetz fibrations}
\author{M.~J.~D.~Hamilton}
\address{      Institute for geometry and topology\\
               University of Stuttgart\\
               Pfaffenwaldring 57\\
               70569 Stuttgart\\
               Germany}
\email{mark.hamilton@math.lmu.de}
\date{\today}
\subjclass[2010]{Primary 57R57, 57R17; Secondary 14J29}
\keywords{4-manifold, symplectic, fibre sum, Lefschetz fibration}
\begin{document}

\begin{abstract}Let $M$ denote the total space of a Lefschetz fibration, obtained by blowing up a Lefschetz pencil on an algebraic surface. We consider the $n$-fold fibre sum $M(n)$, generalizing the construction of the elliptic surfaces $E(n)$. For a Lefschetz pencil on a simply-connected minimal surface of general type we partially calculate the Seiberg-Witten invariants of the fibre sum $M(n)$ using a formula of Morgan-Szab\'o-Taubes. As an application we derive an obstruction for self-diffeomorphisms of the boundary of the tubular neighbourhood of a general fibre in $M(n)$ to extend over the complement of the neighbourhood. Similar obstructions are known in the case of elliptic surfaces.
\end{abstract}

\maketitle

\section{Introduction}

It is well-known that the sequence of simply-connected elliptic surfaces $E(n)$ without multiple fibres can be constructed from the elliptic surface $E(1)$, diffeomorphic to $\mathbb{CP}^2\#9{\overline{\mathbb{CP}}{}^{2}}$, by fibre summing along general fibres \cite[Section 3.1]{GS}. The elliptic fibration on $E(1)$ can be obtained by considering a certain Veronese embedding of $\mathbb{CP}^2$ into a complex projective space and then taking the blow-up of a Lefschetz pencil. We consider the following generalization: Let $M'$ denote an arbitrary smooth algebraic surface. It admits a Lefschetz pencil which extends to a Lefschetz fibration  on some blow-up $M$. We can consider the iterated fibre sums of these fibrations, yielding a sequence $M(n)$ of symplectic manifolds with an induced Lefschetz fibration over $\mathbb{CP}^1$. 

We describe the basic topology of these manifolds in Section \ref{sect invariants}. For example, if $M$ is simply-connected, then all $M(n)$ are simply-connected and there is a description of the intersection form and of the canonical class. Considering the Seiberg-Witten invariants of $M(n)$, we show that if $M'$ is a minimal surface of general type, then the only basic class up to sign of $M(n)$ for all $n\geq 2$ which has non-zero intersection with the fibre is the canonical class. To determine the Seiberg-Witten basic classes of the fibre sum $M(n)$ we use a formula of Morgan, Szab\'o and Taubes. In general, this formula does not determine the Seiberg-Witten invariant of a single characteristic class, but involves a sum over several classes. However, in the case of the manifolds $M(n)$ there is only one summand and the formula completely determines the basic classes under the mentioned constraint.   

We then give an application of these calculations to the question which orientation preserving self-diffeomorphisms of the boundary of the tubular neighbourhood $\nu\Sigma$ of a general fibre in $M(n)$ extend over the complement $M(n)\setminus \text{int}\,\nu\Sigma$. In the case of elliptic surfaces $E(n)$, a complete answer to the corresponding question is known \cite[Theorem 8.3.11]{GS}: For $E(1)$ every diffeomorphism on $\partial\nu\Sigma$ extends over $E(1)\setminus \text{int}\,\nu\Sigma$ and for $E(n)$ with $n\geq 2$ only if it preserves the torus fibration on the boundary. For the manifolds $M(n)$ we derive a criterion that can be used to show that certain diffeomorphisms do not extend over the complement.

\section{Lefschetz fibrations}\label{sect Lefschetz}
Let $M'$ be a smooth complex algebraic surface in some $\mathbb{CP}^N$ of degree $r$, so that it represents the class 
\begin{equation*}
[M']=r[\mathbb{CP}^2]\in H_4(\mathbb{CP}^N;\mathbb{Z}).
\end{equation*}
Choose a hyperplane $H\cong\mathbb{CP}^{N-1}$ in $\mathbb{CP}^N$ intersecting $M'$ transversely. According to Section 1.5 and 1.6 in \cite{La} there exists a Lefschetz pencil containing $H$ as one of its hyperplanes. A pencil is the set of hyperplanes which contain a given linear subspace $A\cong\mathbb{CP}^{N-2}$, called the axis of the pencil. The axis $A$ intersects $M'$ transversely in $r$ points, forming the base locus $B$. Blowing up these points results in a fibration
\begin{equation*}
M=M'\#r{\overline{\mathbb{CP}}{}^{2}}\longrightarrow\mathbb{CP}^1.
\end{equation*}
This is what we call an {\em algebraic Lefschetz fibration}. A Lefschetz fibration has finitely many singular fibres, where the singularities have a certain normal form. We can assume that each singular fibre has precisely one singularity. The generic fibre $\Sigma_M$ is given by the proper transform of a non-singular hyperplane section $\Sigma_{M'}$ in $M'$ determined by a generic hyperplane in the pencil. We denote the genus of $\Sigma_M$ by $g$. 

By the first Lefschetz Hyperplane theorem, the homomorphism
\begin{equation*}
i_M\colon H_1(\Sigma_M;\mathbb{Z})\rightarrow H_1(M;\mathbb{Z}),
\end{equation*}
induced by inclusion for a smooth fibre $\Sigma_M$ is a surjection. The so-called second Lefschetz Hyperplane theorem \cite{AF} shows that the kernel of this map is generated by the set of {\em vanishing cycles}. The vanishing cycles bound embedded disks in $M$, called {\em Lefschetz thimbles} or {\em vanishing disks}, which intersect $\Sigma_M$ only in the vanishing cycle and contain precisely one critical point of the fibration. The vanishing disks are formed above certain arcs in $\mathbb{CP}^1$. For each critical point there is a corresponding vanishing cycle and a vanishing disk. If we frame a vanishing disk $D$ on its boundary by the direction normal to the cycle inside the fibre, then $D$ has self-intersection equal to $-1$.

The fibration defines a natural framing for the tubular neighbourhood of a general fibre $\Sigma_M$ in $M$. Consider a diffeomorphism between the fibres in two copies of $M$ that identifies the vanishing cycles. Lift the diffeomorphism in the standard way to an orientation reversing diffeomorphism of the boundary of the tubular neighbourhoods, using the framing determined by the fibration. If we form the generalized fibre sum, we get a closed  symplectic 4-manifold
\begin{equation*}
M(2)=M\#_{\Sigma_M=\Sigma_M}M.
\end{equation*}
We can iterate the construction to get symplectic 4-manifolds $M(n)$, where
\begin{equation*}
M(n)=M\#_{\Sigma_M=\Sigma_M}M\#_{\Sigma_M=\Sigma_M}\ldots\#_{\Sigma_M=\Sigma_M}M.
\end{equation*}
By our choice of gluing, the Lefschetz fibration on $M$ extends to a symplectic Lefschetz fibration on $M(n)$.

\section{Fibre sums and the Morgan-Szab\'o-Taubes formula}\label{fibre sum}

We recall some results about generalized fibre sums \cite{MHthesis, MH}. Let $M$ and $N$ denote closed oriented 4-manifolds with closed oriented embedded surfaces $\Sigma_M$ and $\Sigma_N$ of genus $g$ and self-intersection zero. We choose embeddings
\begin{align*}
i_M&\colon\Sigma\rightarrow M\\
i_N&\colon\Sigma\rightarrow N
\end{align*}
that realize the surfaces as images of a closed oriented surface $\Sigma$. We fix framings $\Sigma\times D^2$ of the closed tubular neighbourhoods $\nu\Sigma_M$ and $\nu\Sigma_N$ and denote the manifolds minus the interior of the neighbourhoods by $M_0$ and $N_0$. We want to glue $M_0$ and $N_0$ together using an orientation reversing diffeomorphism $\phi\colon\partial M_0\rightarrow\partial N_0$ that preserves the circle fibration and covers the diffeomorphism $i_N\circ i_M^{-1}$ between the surfaces. In the chosen framings any such diffeomorphism is isotopic to a diffeomorphism of the form
\begin{align*}
\phi\colon\Sigma\times S^1&\longrightarrow\Sigma\times S^1,\\
(x,\alpha)&\mapsto (x,C(x)\cdot\overline{\alpha})
\end{align*}
where the bar denotes complex conjugation and $C\colon\Sigma\rightarrow S^1$ is a smooth map. The map $\phi$ depends up to isotopy only on the cohomology class $C^*d\alpha\in H^1(\Sigma;\mathbb{Z})$ that we also denote by $C$. The result of the generalized fibre sum is
\begin{equation*}
X=M\#_{\Sigma_M=\Sigma_N}N=M_0\cup_\phi N_0
\end{equation*}
and in general depends on the choice of the cohomology class $C$.

It is well-known that the Euler characteristic and the signature of $X$ are given by
\begin{align*}
e(X)&=e(M)+e(N)+4g-4\\
\sigma(X)&=\sigma(M)+\sigma(N).
\end{align*}
Suppose that $\Sigma_M$ and $\Sigma_N$ represent indivisible homology classes. Then the first homology of $X$ is given by the cokernel of the homomorphism
\begin{equation*}
i_M\oplus i_N\colon H_1(\Sigma;\mathbb{Z})\rightarrow H_1(M;\mathbb{Z})\oplus H_1(N;\mathbb{Z}),
\end{equation*}
induced by the embeddings.

We want to describe the second homology of $X$. Assume from now on that $M$, $N$ and $X$ have torsion free homology and $\Sigma_M$ and $\Sigma_N$ represent indivisible classes. The framings determine push-offs of $\Sigma_M$ and $\Sigma_N$ into the boundaries $\partial M_0$ and $\partial N_0$. Under inclusion in $X$ we get surfaces $\Sigma_X$ and $\Sigma_X'$. There also exist surfaces $B_M$ and $B_N$ in $M$ and $N$ which intersect $\Sigma_M$ and $\Sigma_N$ in a single positive transverse point. Since the gluing preserves the meridians $\{\ast\}\times S^1$ these surfaces sew together in $X$ to a surface $B_X$. We define $P(M)$ to be the orthogonal complement of the span of $\Sigma_M$ and $B_M$ in $H_2(M;\mathbb{Z})$, and similarly for $N$. A curve on $\Sigma_M$ times the meridian $\sigma_M$ defines a torus on $\partial M_0$. Under inclusion in $X$ this is a torus of self-intersection zero, called a {\em rim torus}. It is null-homologous in $M$, but not necessarily in $X$. There are also so called {\em vanishing surfaces} in $X$ (see \cite{FS2}), sewed together along a curve on the push-off of $\Sigma_M$ in $\partial M_0$ and a curve on $\partial N_0$ which get identified under gluing and bound in $M_0$ and $N_0$. With these preparations we can describe the second homology of $X$ and the intersection form. Let $c$ denote the rank of the kernel of the homomorphism $i_M\oplus i_N$ above.  

\begin{thm}\label{formula H^2 no torsion} There exists a basis $S_1,\ldots,S_c$ of the vanishing classes $S'(X)$ and a basis $R_1,\ldots R_c$ of the rim tori $R(X)$ such that there exists a splitting
\begin{equation*}
H_2(X;\mathbb{Z})=P(M)\oplus P(N)\oplus (S'(X)\oplus R(X))\oplus (\mathbb{Z}B_X\oplus\mathbb{Z}\Sigma_X),
\end{equation*}
where 
\begin{equation*}
(S'(X)\oplus R(X))=(\mathbb{Z}S_1\oplus\mathbb{Z}R_1)\oplus\dotsc\oplus(\mathbb{Z}S_c\oplus\mathbb{Z}R_c).
\end{equation*}
The direct sums are all orthogonal, except the direct sums inside the brackets. In this decomposition of $H_2(X;\mathbb{Z})$, the restriction of the intersection form $Q_X$ to $P(M)$ and $P(N)$ is equal to the intersection form induced from $M$ and $N$ and has the structure
\begin{equation*}
\left(\begin{array}{cc} B_M^2+B_N^2 &1 \\ 1& 0 \\ \end{array}\right)
\end{equation*} 
on $\mathbb{Z}B_X\oplus\mathbb{Z}\Sigma_X$ and the structure   
\begin{equation*}
\left(\begin{array}{cc} S_i^2 &1 \\ 1& 0 \\ \end{array}\right)
\end{equation*}  
on each summand $\mathbb{Z}S_i\oplus\mathbb{Z}R_i$. We call such a basis for $H_2(X;\mathbb{Z})$ a normal form basis.
\end{thm}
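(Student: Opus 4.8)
The plan is to compute $H_2(X;\mathbb{Z})$ via a Mayer--Vietoris decomposition for $X = M_0 \cup_\phi N_0$, tracking the homology of the pieces $M_0$, $N_0$ and their common boundary $\partial M_0 = \Sigma \times S^1$. First I would record the homology of $M_0 = M \setminus \operatorname{int}\nu\Sigma_M$. Since $\nu\Sigma_M$ is a $D^2$-bundle over $\Sigma$ with trivial framing, Poincar\'e--Lefschetz duality and the long exact sequence of the pair $(M, M_0)$ express $H_*(M_0)$ in terms of $H_*(M)$ and the classes that intersect $\Sigma_M$. The key structural input is that $\Sigma_M$ and $B_M$ span a hyperbolic sublattice of $H_2(M;\mathbb{Z})$ (they meet in one point, and $\Sigma_M^2 = 0$), so $H_2(M;\mathbb{Z}) = P(M) \oplus (\mathbb{Z}\Sigma_M \oplus \mathbb{Z}B_M)$ orthogonally by definition of $P(M)$. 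Removing the neighbourhood kills $\Sigma_M$ as a boundary and replaces $B_M$ by a relative class $B_M^0$; the classes in $P(M)$ survive to $M_0$ since they can be represented disjointly from $\nu\Sigma_M$.

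Next I would assemble the Mayer--Vietoris sequence with $A = M_0$, $B = N_0$, $A \cap B \simeq \Sigma \times S^1$. The homology of $\Sigma \times S^1$ is read off by K\"unneth: $H_1(\Sigma\times S^1) = H_1(\Sigma) \oplus \mathbb{Z}\langle\sigma\rangle$ where $\sigma$ is the meridian, and $H_2(\Sigma\times S^1) = H_2(\Sigma) \oplus (H_1(\Sigma)\otimes\langle\sigma\rangle)$. The factor $H_2(\Sigma) = \mathbb{Z}[\Sigma]$ maps to the push-off classes $\Sigma_X, \Sigma_X'$, which become identified in $X$ (giving the single class $\Sigma_X$), while the factor $H_1(\Sigma)\otimes\langle\sigma\rangle$ is exactly where the rim tori $R(X)$ originate: a curve $\gamma$ on $\Sigma$ crossed with $\sigma$ is null-homologous in each of $M$, $N$ but can survive in $X$. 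Chasing the sequence, the cokernel contribution from the boundary produces $B_X$ (sewing $B_M$ and $B_N$) and $\Sigma_X$, while the image/kernel interplay over $H_1(\Sigma)$ produces the rim tori and, dually, the vanishing classes $S'(X)$. The rank count is governed by $c$, the rank of $\ker(i_M\oplus i_N)$, since a rim torus $\gamma\times\sigma$ is non-trivial in $X$ precisely when $\gamma$ lies in that kernel (so it bounds on both sides and the two bounding pieces glue to a closed vanishing surface).

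Having fixed the set of generators, I would then verify the intersection-form assertions geometrically rather than purely algebraically. The orthogonality of $P(M)$ and $P(N)$ to everything else follows because their representatives sit in the interior of $M_0$, $N_0$ away from the gluing region and away from $B_M, B_N$ by definition of $P$. The hyperbolic block on $\mathbb{Z}B_X\oplus\mathbb{Z}\Sigma_X$ comes from $B_X\cdot\Sigma_X = 1$ (the single transverse intersection of $B_M$ with its fibre, preserved under sewing) and $\Sigma_X^2 = 0$ (push-off of a self-intersection-zero surface into the boundary); the diagonal entry $B_X^2 = B_M^2 + B_N^2$ is additivity of the normal Euler number under the sewing of $B_M$ and $B_N$ along their common meridian circle. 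For the blocks $\mathbb{Z}S_i\oplus\mathbb{Z}R_i$, the pairing $S_i\cdot R_i = 1$ records that a vanishing surface built from a curve $\gamma_i$ meets the rim torus $\gamma_i\times\sigma$ transversally once, $R_i^2 = 0$ because a rim torus is a torus of self-intersection zero, and $S_i^2$ is an integer measuring the self-intersection of the vanishing surface. I would choose the bases $S_1,\dots,S_c$ and $R_1,\dots,R_c$ dual to a symplectic/geometric basis of $\ker(i_M\oplus i_N)\subset H_1(\Sigma;\mathbb{Z})$ so that cross terms $S_i\cdot R_j$ vanish for $i\neq j$ and the rim tori are mutually orthogonal.

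The main obstacle I anticipate is the bookkeeping that separates the rim-torus and vanishing-class subspace cleanly from the rest and establishes it as an orthogonal summand, rather than just finding it as a subquotient. The Mayer--Vietoris sequence only gives a short exact sequence, so realizing the asserted \emph{direct sum} splitting requires constructing explicit geometric representatives and checking their intersection pairing is unimodular on each block (each $2\times 2$ block has determinant $-1$), which forces the block to split off as an orthogonal direct summand over $\mathbb{Z}$; the torsion-free hypotheses on $M$, $N$, $X$ are exactly what make this algebra go through. The subtle point within this is verifying that the vanishing class $S_i$ is well defined up to the choice of $R_i$ (adding a rim torus), and that with the correct normalization the off-diagonal blocks genuinely vanish; this is where I expect most of the careful work, and I would lean on the references \cite{MHthesis, MH, FS2} for the construction of the vanishing surfaces and the precise self-intersection computation of $S_i$.
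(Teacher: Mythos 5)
There is no proof of this theorem in the paper itself: it is recalled from the author's thesis and preprint \cite{MHthesis, MH}, where it is established by exactly the kind of Mayer--Vietoris decomposition of $X=M_0\cup_\phi N_0$, geometric representatives, and unimodular-block lattice splitting that you outline. So your overall strategy is the standard one and agrees with the cited source; the issue is not the route but two points inside it, one of which is a genuine error.

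The wrong step is your claim that ``a rim torus $\gamma\times\sigma$ is non-trivial in $X$ precisely when $\gamma$ lies in $\ker(i_M\oplus i_N)$.'' This is essentially backwards, and your parenthetical justification (``it bounds on both sides and the two bounding pieces glue to a closed vanishing surface'') describes vanishing surfaces, not rim tori: curves $\alpha$ in the kernel produce the classes $S(\alpha)$, while a rim torus $R(\gamma)=\gamma\times\sigma$ is detected in $H_2(X;\mathbb{Z})$ only through the pairing $R(\gamma)\cdot S(\alpha)=\pm\langle\gamma,\alpha\rangle_\Sigma$. Since any class of $X$ can be represented transverse to the neck, crossing it in curves whose $H_1(\Sigma)$-part lies in the kernel, unimodularity of $Q_X$ and torsion-freeness give: $R(\gamma)\neq 0$ if and only if $\gamma$ pairs non-trivially with $\ker(i_M\oplus i_N)$, i.e.\ $\gamma\notin\ker(i_M\oplus i_N)^\perp$ with respect to the intersection form of $\Sigma$. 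The two criteria genuinely differ whenever the kernel is isotropic: for genus $2$ with symplectic basis $a_1,b_1,a_2,b_2$ and $\ker=\mathbb{Z}a_1$, one gets $R(a_1)=0$ but $R(b_1)\neq 0$, the opposite of your assertion; moreover, if your criterion held, then for such a kernel all products $R_i\cdot S_j=\langle\alpha_i,\alpha_j\rangle=0$ would vanish and the hyperbolic blocks $\mathbb{Z}S_i\oplus\mathbb{Z}R_i$ asserted by the theorem could not exist. The slip is invisible in this paper's application (there $H_1(M)=H_1(N)=0$, so the kernel is all of $H_1(\Sigma)$ and its perp is zero), but the theorem is stated in the general setting. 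Note that your own later prescription --- $S_i$ from a basis $\alpha_1,\dots,\alpha_c$ of the kernel and $R_i$ over curves \emph{dual} to the $\alpha_i$ (which exist because the kernel is primitive and the form on $H_1(\Sigma)$ unimodular, but which in general do \emph{not} lie in the kernel) --- is the correct construction and silently contradicts the earlier claim; the rank count should be derived from this perfect pairing between $R(X)$ and $S'(X)$, not from your criterion. Finally, be aware that what you flag as ``the main obstacle'' (realizing the direct-sum splitting with all the stated orthogonality, including $S_i\cdot S_j=0$ and $S_i\cdot B_X=0$, by correcting the $S_i$ with rim tori and multiples of $\Sigma_X$) is indeed where the real work lies, and your proposal delegates it to \cite{MHthesis, MH, FS2} rather than carrying it out, so as written it is an outline of the source argument with one incorrect link rather than a complete independent proof.
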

The vanishing surfaces $S_i$ are obtained from a basis $\alpha_1,\ldots,\alpha_c$ for the kernel of $i_M\oplus i_N$. Suppose we only have a generating set for this subgroup. Expressing each basis element $\alpha_i$ in terms of the generating set we have:
\begin{prop}\label{gen set for S'X}
Let $\tilde{S}_1,\ldots,\tilde{S}_e$ denote vanishing surfaces obtained from a generating set for the kernel of $i_M\oplus i_N$. Then we can write each vanishing surface $S_i$ in Theorem \ref{formula H^2 no torsion} as a linear combination of the $\tilde{S}_j$ and certain rim tori.
\end{prop}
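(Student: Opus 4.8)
The plan is to exhibit the assignment that sends a class in the kernel of $i_M\oplus i_N$ to its vanishing surface as a homomorphism, well-defined modulo rim tori, and then to read off the statement from the relation $\alpha_i=\sum_j m_{ij}\gamma_j$ between the basis and the generating set. First I would recall the construction: given $\gamma\in\ker(i_M\oplus i_N)$, push $\gamma$ off into $\partial M_0\cong\Sigma\times S^1$ as $\gamma\times\{p\}$. Since $\gamma$ is null-homologous in $M$, after pushing into the interior it bounds a surface $\Gamma_M\subset M_0$ with $\partial\Gamma_M=\gamma\times\{p\}$ (possibly after correcting by meridian copies, which is exactly the source of the later rim-tori ambiguity), and likewise a surface $\Gamma_N\subset N_0$. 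Because the gluing diffeomorphism $\phi$ covers the identification $i_N\circ i_M^{-1}$ of the two surfaces and preserves the circle fibration, the two boundary curves are matched, so $S_\gamma=\Gamma_M\cup\Gamma_N$ closes up to the vanishing surface of $\gamma$ in $X$. The $\tilde S_j$ arise this way from the generating set, and each $S_i$ arises this way from the corresponding basis element.

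Since the $\gamma_j$ generate the kernel, write each basis element as $\alpha_i=\sum_j m_{ij}\gamma_j$ in $H_1(\Sigma;\mathbb{Z})$. I would then build a Seifert surface for $\alpha_i$ on the $M$-side as the signed union $\sum_j m_{ij}\Gamma_M^{\gamma_j}$, whose boundary $\sum_j m_{ij}(\gamma_j\times\{p_j\})$ represents the class of a single push-off $\alpha_i\times\{p\}$ in $H_1(\Sigma\times S^1)$; completing it by a $2$-chain $W_M$ lying entirely in the collar $\partial M_0=\Sigma\times S^1$ produces an honest Seifert surface for $\alpha_i$. Carrying out the same construction on the $N$-side with a $2$-chain $W_N$ and gluing, the resulting representative of $[S_{\alpha_i}]$ differs from $\sum_j m_{ij}[\tilde S_j]$ exactly by the closed surface $W_M\cup\phi(W_N)$, which is contained in the gluing region $\Sigma\times S^1\subset X$.

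The last step is to identify this correction. As $H_2(\Sigma\times S^1)$ is generated by the fibre class and the tori $\gamma\times S^1$, and these map under inclusion to the fibre $\Sigma_X$ and to rim tori respectively, the correction class lies in $\mathbb{Z}\Sigma_X\oplus R(X)$. To eliminate the fibre term I would intersect with $B_X$: by Theorem \ref{formula H^2 no torsion} the vanishing surfaces and the rim tori are orthogonal to $B_X$ while $\Sigma_X\cdot B_X=1$, so pairing the identity $[S_{\alpha_i}]=\sum_j m_{ij}[\tilde S_j]+(\text{correction})$ with $B_X$ forces the coefficient of $\Sigma_X$ to vanish. Hence $[S_{\alpha_i}]$ equals $\sum_j m_{ij}[\tilde S_j]$ up to rim tori, as claimed. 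The main obstacle is the middle step: one must check that the correction needed to turn the signed union of the $\Gamma_M^{\gamma_j}$ into a Seifert surface for a single push-off can be localized in the collar $\Sigma\times S^1$, so that it contributes no $P(M)$- or $P(N)$-components and only the fibre and rim tori survive; the intersection computation with $B_X$ then disposes of the fibre.
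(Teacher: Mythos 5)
Your overall strategy is the one the paper intends (the paper in fact states this proposition without proof; the only indication given is the sentence ``Expressing each basis element $\alpha_i$ in terms of the generating set,'' with details deferred to \cite{MHthesis,MH}), and two of your steps are sound: the correction chain $W_M\cup\phi(W_N)$ can indeed be localized in the neck, and $H_2(\Sigma\times S^1;\mathbb{Z})\cong\mathbb{Z}[\Sigma]\oplus H_1(\Sigma;\mathbb{Z})$ maps under inclusion exactly to $\mathbb{Z}\Sigma_X$ plus rim tori. The genuine gap is your opening claim that $\gamma\mapsto[S_\gamma]$ is ``well-defined modulo rim tori.'' It is not: once the boundary curves are fixed, the bounding surface $\Gamma_M\subset M_0$ is determined only up to adding an arbitrary closed $2$-cycle of $M_0$, and the image of $H_2(M_0;\mathbb{Z})\rightarrow H_2(X;\mathbb{Z})$ contains, besides rim tori, the fibre class $\Sigma_X$ and all of $P(M)$ (any class in $P(M)$ has zero intersection with $\Sigma_M$, hence comes from $H_2(M_0;\mathbb{Z})$ by the exact sequence of the pair $(M,M_0)$). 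So two vanishing surfaces for the same $\alpha_i$ can differ by an arbitrary element of $P(M)\oplus P(N)\oplus\mathbb{Z}\Sigma_X\oplus R(X)$; the ``meridian corrections'' you point to are neither the only nor the main source of ambiguity, and you have misplaced the real difficulty.

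This breaks the argument in two concrete places. First, what you actually prove is an identity between $\sum_j m_{ij}[\tilde S_j]$ and the class of a vanishing surface for $\alpha_i$ built from \emph{your} choices $\Gamma_M^{\gamma_j},\Gamma_N^{\gamma_j}$; the proposition concerns the specific normal-form class $S_i$ of Theorem \ref{formula H^2 no torsion}, which is a different representative, and the two may differ by $P(M)\oplus P(N)\oplus\mathbb{Z}\Sigma_X$ terms that you never control. Second, the $B_X$-pairing step quotes Theorem \ref{formula H^2 no torsion} for ``the vanishing surfaces and the rim tori are orthogonal to $B_X$,'' but the theorem asserts this only for the normal-form basis $S_i,R_i$, not for the $\tilde S_j$ nor for your representative: replacing $\Gamma_M$ by $\Gamma_M+\Sigma$ changes the intersection with $B_X$ by $1$, so $[\tilde S_j]\cdot B_X=0$ is a property of a choice, not a consequence of the theorem. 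A correct argument along your lines should instead characterize $S_i$ up to rim tori by intersection data --- namely $S_i\cdot R=\pm\,(\alpha_i\cdot\delta)$ for every rim torus $R$ coming from a curve $\delta\subset\Sigma$ (this pairing genuinely depends only on $\alpha_i$, since it is computed in the neck), together with orthogonality to $P(M)\oplus P(N)\oplus\mathbb{Z}B_X\oplus\mathbb{Z}\Sigma_X$, using unimodularity of the form on the complementary summands --- and then verify that the given $\tilde S_j$ satisfy the same orthogonality properties, so that $\sum_j m_{ij}\tilde S_j$ does as well. That verification, which holds for the specific surfaces constructed in \cite{MHthesis,MH} (and for the geometric vanishing spheres used later in the paper) but not for arbitrary Seifert-surface representatives, is exactly the missing content.
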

The rim tori are needed in this proposition to separate the different vanishing surfaces that we get. Generally speaking, we fix a basis of rim tori and then determine a dual basis of vanishing surfaces. A basis of rim tori in one generalized fibre sum determines such a basis in all of them.

There is a formula for the Seiberg-Witten invariants of $X$ due to Morgan, Szab\'o and Taubes \cite{MST} that we want to describe in our notation. The formula works only if $g\geq 2$. Recall that the Seiberg-Witten invariant of a closed, oriented 4-manifold with $b_2^+>1$ is a map
\begin{equation*}
SW_X\colon \mathrm{Spin}^c(X)\rightarrow\mathbb{Z}.
\end{equation*}
There is a related invariant
\begin{equation*}
SW_X\colon\mathcal{C}(X)\rightarrow\mathbb{Z},
\end{equation*}
defined on the set $\mathcal{C}(X)$ of characteristic elements in $H^2(X;\mathbb{Z})$ by summing over all $\mathrm{Spin}^c$-structures with the same first Chern class. Under our assumptions a $\mathrm{Spin}^c$-structure is determined by its Chern class. A characteristic class $k\in \mathcal{C}(X)$ is called a Seiberg-Witten basic class if $SW_X(k)\neq 0$. Let $k'$ be a Seiberg-Witten basic class of $X$. The adjunction inequality shows that $k'$ has zero intersection with each rim torus, hence the Poincar\'e dual of $k'$ has no vanishing surface component. The adjunction inequality also shows that $|k'\cdot\Sigma_X|\leq 2g-2$. The formula only makes a statement about the case that $k'\cdot\Sigma_X=\pm(2g-2)$. We can restrict to the positive case: Let $k$ denote a characteristic class on $X$ such that the Poincar\'e dual $PD(k)$ is of the form
\begin{equation*}
PD(k)=p_M+p_N+\sum_{i=1}^c\epsilon_iR_i+(2g-2)B_X+\beta_X\Sigma_X,
\end{equation*}
with $p_M\in P(M)$ and $p_N\in P(N)$. One can show that
\begin{equation*}
H^2(M_0;\mathbb{Z})\cong H_2(M_0,\partial M_0;\mathbb{Z})=P(M)\oplus \mathbb{Z}B_M'\oplus\text{ker}\,i_M,
\end{equation*}
where $B_M'$ is the surface $B_M$ with a disk deleted and $i_M$ denotes the map on $H_1(\Sigma;\mathbb{Z})$ induced by inclusion. There are obvious restriction maps of $H^2(X;\mathbb{Z})$ and $H^2(M;\mathbb{Z})$ to $H^2(M_0;\mathbb{Z})$, and similarly for $N$. It follows that the set of characteristic classes on $X$, which have the same square and the same restriction to $M_0$ and $N_0$ as $k$, is given by
\begin{equation*}
\mathcal{K}(k)=\{l\in \mathcal{C}(X)\mid \text{$PD(l)=PD(k)+R$ with $R\in R(X)$}\}.
\end{equation*}
We also set
\begin{align*}
\mathcal{K}_M(k)&=\{l\in\mathcal{C}(M)\mid \text{$PD(l)=p_M+(2g-2)B_M+\beta_M\Sigma_M$ with $\beta_M\in\mathbb{Z}$}\}\\
\mathcal{K}_N(k)&=\{l\in\mathcal{C}(N)\mid \text{$PD(l)=p_N+(2g-2)B_N+\beta_N\Sigma_N$ with  $\beta_N\in\mathbb{Z}$}\}.
\end{align*}
Then we get:
\begin{thm}[Morgan-Szab\'o-Taubes]\label{thm MST}
In the situation above we have
\begin{equation*}
\sum_{k'\in \mathcal{K}(k)}SW_X(k')=\pm\sum SW_M(l_1)SW_N(l_2),
\end{equation*}
where the sum on the right extends over those $(l_1,l_2)\in \mathcal{K}_M(k)\times\mathcal{K}_N(k)$ with $\beta_X=\beta_M+\beta_N+2$.
\end{thm}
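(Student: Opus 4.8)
The plan is to obtain this as a reformulation of the gluing theorem of \cite{MST} adapted to the normal form basis of Theorem \ref{formula H^2 no torsion}; the analytic content — that for a fibre sum $X=M_0\cup_\phi N_0$ along $\Sigma\times S^1$ the Seiberg-Witten moduli space splits as a fibred product of the relative moduli spaces over $M_0$ and $N_0$ in the $\mathrm{Spin}^c$ structures attaining the extreme value $2g-2$ on $\Sigma$ — is supplied by \cite{MST} and I would not reprove it. The task is then purely homological bookkeeping. I would first record why the contributing classes have the stated shape: the adjunction inequality applied to $\Sigma_X$ (genus $g\geq 2$, square zero) gives $|k'\cdot\Sigma_X|\leq 2g-2$, and applied to each rim torus $R_i$ (genus one, square zero) forces $k'\cdot R_i=0$. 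Hence the Poincar\'e dual of a basic class has no vanishing surface component and, in the maximal case under consideration, carries coefficient $2g-2$ on $B_X$.

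Next I would identify the set $\mathcal{K}(k)$ with a fibre of the restriction map to $H^2(M_0;\mathbb{Z})\oplus H^2(N_0;\mathbb{Z})$. Using the splitting of Theorem \ref{formula H^2 no torsion}, adding an element of $R(X)$ to $PD(k)$ changes none of the $P(M)$, $P(N)$, $B_X$ or $\Sigma_X$ components, and since the rim tori restrict trivially it does not alter the restriction to $M_0$ or $N_0$. Thus $\mathcal{K}(k)$ is exactly the collection of characteristic classes on $X$ restricting as $k$ does, which is the grouping over which the left-hand side of the \cite{MST} formula is taken. On the other side, the relative invariants of $M_0$ and $N_0$ are matched with $SW_M(l_1)$ and $SW_N(l_2)$ for the closed-up classes $l_i$ having the prescribed $P$- and $B$-components, that is, for $(l_1,l_2)\in\mathcal{K}_M(k)\times\mathcal{K}_N(k)$; the product $SW_M(l_1)SW_N(l_2)$ is then the count on the fibred product, and the overall sign ambiguity $\pm$ absorbs the orientation conventions in the gluing.

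The one genuine point, and the step I expect to be the main obstacle, is the matching condition $\beta_X=\beta_M+\beta_N+2$ on the $\Sigma$-components. This encodes which pairs of relative $\mathrm{Spin}^c$ structures glue to give a class in $\mathcal{K}(k)$, and the shift $+2$ is the gluing anomaly along the neck. I would pin it down by tracking the $\Sigma_X$-coefficient through the dual surface $B_X$, which is sewn from $B_M$ and $B_N$ so that $g(B_X)=g(B_M)+g(B_N)$ and $B_X^2=B_M^2+B_N^2$. For a class $PD(l)=\ldots+(2g-2)B+\beta\Sigma$ the intersection matrix gives $l\cdot B=(2g-2)B^2+\beta$, and comparison with the adjunction value $l\cdot B=2g(B)-2-B^2$ yields $\beta=2g(B)-2-(2g-1)B^2$ in each of $M$, $N$ and $X$; subtracting and inserting $g(B_X)=g(B_M)+g(B_N)$, $B_X^2=B_M^2+B_N^2$ produces exactly $\beta_X=\beta_M+\beta_N+2$. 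The delicate part is to verify that this homological identity agrees with the index-theoretic grading shift in \cite{MST} and is compatible with their sign and orientation conventions, so that the displayed formula holds as stated.
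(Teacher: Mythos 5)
Your overall strategy---defer the analytic gluing content to \cite{MST} and reduce the statement to bookkeeping in the normal form basis of Theorem \ref{formula H^2 no torsion}---is the right one; the paper itself gives no proof of Theorem \ref{thm MST}, treating it as a restatement of \cite{MST} in its own notation, so this bookkeeping is precisely what a proof would have to supply. Your first two steps are sound: the adjunction inequality does force basic classes to pair trivially with rim tori and to satisfy $|k'\cdot\Sigma_X|\leq 2g-2$, and $\mathcal{K}(k)$ is indeed the fibre of the restriction map, since a rim torus lies in $\partial M_0$ and therefore dies in $H_2(M_0,\partial M_0;\mathbb{Z})\cong H^2(M_0;\mathbb{Z})$.

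The gap is in the step you yourself flag as the main point: the derivation of the matching condition $\beta_X=\beta_M+\beta_N+2$. You impose the \emph{adjunction equality} $l\cdot B=2g(B)-2-B^2$ on the classes $l=k,l_1,l_2$ and the surfaces $B_X,B_M,B_N$. That equality is the adjunction formula for a canonical class evaluated on a symplectic or complex curve; for the arbitrary characteristic (even arbitrary basic) classes occurring in the theorem it is simply false---Seiberg--Witten theory gives only an \emph{inequality}, for basic classes and under hypotheses on $B^2$. Moreover, your argument would determine each of $\beta_M$ and $\beta_N$ individually, whereas the theorem sums over all pairs $(l_1,l_2)$ subject only to the constraint on $\beta_M+\beta_N$; already for $M=M'\#r\overline{\mathbb{CP}}{}^2$ the basic classes $\pm K_{M'}\pm E_1\pm\dots\pm E_r$ have many different evaluations on a section, so no single value of $\beta_M$ can be forced. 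Your computation outputs the correct shift only because it is secretly the computation for the canonical classes, for which adjunction is an equality (compare $\sigma_X=K_MB_M+K_NB_N+2-(2g-2)(B_M^2+B_N^2)$ in Theorem \ref{thm on the canonical class Gompf}); it is not a proof of the combinatorial condition for general classes. The correct source of the $+2$ is topological/index-theoretic: writing $d(k)=\frac14\left(k^2-2e-3\sigma\right)$ for the expected dimension and using $e(X)=e(M)+e(N)+4g-4$, $\sigma(X)=\sigma(M)+\sigma(N)$ together with the squares computed in the normal form basis (where all rim torus contributions vanish), one finds
\begin{equation*}
d_X(k)-d_M(l_1)-d_N(l_2)=\frac{2g-2}{2}\,\bigl(\beta_X-\beta_M-\beta_N-2\bigr),
\end{equation*}
so for $g\geq 2$ the condition $\beta_X=\beta_M+\beta_N+2$ is exactly additivity of expected dimensions, i.e.\ the condition for Spin$^c$ structures on $M$ and $N$ restricting compatibly to the neck to glue to one lying in $\mathcal{K}(k)$. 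Equivalently, one can track the two capping disks of $B_M$ and $B_N$ inside the copies of $\Sigma\times D^2$, each of which shifts the evaluation of the relevant Chern class on the section by $1$. Until your adjunction step is replaced by an argument of this kind, the proposal does not establish the formula as stated.
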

Note that if $M$ and $N$ are of simple type, then there is for a given characteristic class $k$ at most one non-zero summand on the right hand side of the formula; see also \cite{MuW}.

Finally, if $M$ and $N$ are symplectic and the surfaces $\Sigma_M$ and $\Sigma_N$ symplectically embedded, then the fibre sum $X$ admits a symplectic structure for all gluing diffeomorphisms $\phi$ as above \cite{Go, McW}. There is a formula for the canonical class of $X$ that in general depends on the choice of $C$. Let $X_0$ denote the fibre sum with $C=0$, corresponding to the gluing diffeomorphism that identifies the push-offs of the surfaces.
\begin{thm}\label{thm on the canonical class Gompf} Choose a normal form basis for $H_2(X;\mathbb{Z})$ as in Theorem \ref{formula H^2 no torsion}. Suppressing Poincar\'e duality, the canonical class of $X$ is given by
\begin{equation}\label{formula canonical class with sigma term}
K_X=\overline{K_M}+\overline{K_N}+\sum_{i=1}^cr_iR_i+b_XB_X+\sigma_X\Sigma_X,
\end{equation}
where
\begin{align*}
\overline{K_M}&=K_M-(2g-2)B_M-(K_MB_M-(2g-2)B_M^2)\Sigma_M \in P(M)\\
\overline{K_N}&=K_N-(2g-2)B_N-(K_NB_N-(2g-2)B_N^2)\Sigma_N \in P(N)\\
r_i&=K_XS_i=K_{X_0}S_i-a_i(K_NB_N+1-(2g-2)B_N^2)\\
b_X&=2g-2\\
\sigma_X&=K_MB_M+K_NB_N+2-(2g-2)(B_M^2+B_N^2).
\end{align*}
In evaluating $K_{X_0}S_i$, we choose the basis of rim tori in $X_0$ determined by the basis in $X$ and a corresponding dual basis of vanishing surfaces.
\end{thm}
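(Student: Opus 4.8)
The plan is to determine the coefficients of $K_X$ in the normal form basis of Theorem~\ref{formula H^2 no torsion} one summand at a time, using two geometric inputs. The first is that Gompf's symplectic sum construction builds $X$ out of the pieces $M_0$ and $N_0$ with the symplectic structure (hence $TX$, $\omega$, and the canonical class) agreeing with that of $M$ over $M_0$ and with that of $N$ over $N_0$, so that $K_X|_{M_0}=K_M|_{M_0}$ and $K_X|_{N_0}=K_N|_{N_0}$. The second is the adjunction formula. Since all summands in the splitting are orthogonal except inside the two kinds of brackets, and the restricted forms on $P(M)\oplus P(N)$ and on $\mathbb{Z}B_X\oplus\mathbb{Z}\Sigma_X$ are unimodular, each component of $K_X$ is pinned down by pairing $K_X$ against the appropriate dual basis vectors.

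First I would compute the easy coefficients by pairing. Pairing $K_X$ with $\Sigma_X$ kills everything except $b_XB_X$, and the adjunction equality for the symplectic fibre $\Sigma_X$ (genus $g$, self-intersection $0$) gives $K_X\Sigma_X=2g-2$, hence $b_X=2g-2$. Pairing $K_X$ with each rim torus $R_i$ isolates the $S_i$-coefficient; since $K_X$ is a Seiberg-Witten basic class of the symplectic manifold $X$, the adjunction inequality forces $K_XR_i=0$ (as already noted before Theorem~\ref{thm MST}), so no $S_i$ appears and one gets $r_i=K_XS_i$ upon pairing with $S_i$.

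Next come the $P(M)$- and $P(N)$-parts and $\sigma_X$. For $p\in P(M)\subset H_2(M_0)$ the intersection $K_X\cdot p$ may be computed inside $M_0$, where $K_X$ restricts to $K_M$; thus $K_X\cdot p=K_M\cdot p$, and writing $K_M=\overline{K_M}+(2g-2)B_M+(K_MB_M-(2g-2)B_M^2)\Sigma_M$ as the orthogonal projection onto $P(M)$ (the $B_M$-coefficient being $K_M\Sigma_M=2g-2$ by adjunction in $M$, the $\Sigma_M$-coefficient obtained by pairing with $B_M$) identifies the $P(M)$-component of $K_X$ with exactly the stated $\overline{K_M}$; the same argument on $N_0$ gives $\overline{K_N}$. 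For $\sigma_X$ I pair $K_X$ with $B_X$: using $B_X^2=B_M^2+B_N^2$ this yields $\sigma_X=K_XB_X-(2g-2)(B_M^2+B_N^2)$, and I evaluate $K_XB_X$ by the adjunction equality for the sewn symplectic surface $B_X$, whose genus is $g(B_M)+g(B_N)$. Combining this with the adjunction formulas for $B_M$ and $B_N$ collapses to $K_XB_X=K_MB_M+K_NB_N+2$ (the $+2$ reflecting the two deleted disks), giving the claimed $\sigma_X$.

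The remaining and genuinely delicate coefficients are the rim-torus coefficients $r_i=K_XS_i$, which carry the dependence of $X$ on the gluing class $C$. My plan is to first treat the untwisted sum $X_0$ (gluing with $C=0$), where the vanishing surfaces are built symmetrically from a kernel basis $\alpha_i$ and $K_{X_0}S_i$ is computed from the restrictions $K_{X_0}|_{M_0}=K_M|_{M_0}$ and $K_{X_0}|_{N_0}=K_N|_{N_0}$, and then to compare $X$ with $X_0$. The twist $\phi_C$ changes how the $N_0$-part of each vanishing surface closes up across the neck, so that the surface $S_i$ in $X$ differs from its counterpart in $X_0$ by a definite number $a_i$ (read off from $C$ and $\alpha_i$) of model pieces whose $K_N$-pairing contributes $K_NB_N-(2g-2)B_N^2$ together with a $+1$ from the meridian intersection. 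Bookkeeping this shift against $K_X$ (which restricts to $K_N$ on $N_0$) produces the correction $-a_i(K_NB_N+1-(2g-2)B_N^2)$. This is the step I expect to be the main obstacle: it requires the explicit geometric model of the vanishing surfaces from Proposition~\ref{gen set for S'X}, a precise description of how regluing by $C$ alters them by rim tori, and the matching of the bases of rim tori and dual vanishing surfaces in $X$ and $X_0$ that the theorem's final sentence demands.
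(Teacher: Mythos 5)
A preliminary remark: the paper itself contains no proof of Theorem \ref{thm on the canonical class Gompf}; it is recalled verbatim from \cite{MHthesis, MH}. So your attempt has to be judged on its own merits rather than against an in-paper argument.

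Your overall mechanism is the right one, and most of the ``diagonal'' coefficients are handled correctly: since all blocks of the normal form basis are unimodular, $K_X$ is determined by its pairings, the restriction identities $K_X|_{M_0}=K_M|_{M_0}$ and $K_X|_{N_0}=K_N|_{N_0}$ give the $P(M)$- and $P(N)$-components, adjunction on the symplectic fibre gives $b_X=2g-2$, and pairing with $R_i$ and $S_i$ reduces everything to $K_XR_i=0$ and $r_i=K_XS_i$. The decisive gap is that the formula $r_i=K_{X_0}S_i-a_i(K_NB_N+1-(2g-2)B_N^2)$ is never actually proved. This term is the entire content that distinguishes the twisted sum $X$ from the untwisted sum $X_0$, i.e.\ the dependence of $K_X$ on the gluing class $C$; you describe a plausible plan (the vanishing surface in $X$ differs from its counterpart in $X_0$ by $a_i$ ``model pieces'') but explicitly defer the bookkeeping as ``the main obstacle.'' Without an explicit geometric model of how the relative pieces of $S_i$ close up under the regluing $\phi_C$, a verification that the defect is measured exactly by $a_i=\langle C,\alpha_i\rangle$, and the base-matching between rim tori and dual vanishing surfaces in $X$ and $X_0$ demanded by the theorem's last sentence, the statement is not established; this is precisely the technical heart of the proof in \cite{MH}.

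Two smaller but genuine defects. First, your argument that the $S_i$-components vanish invokes Taubes' theorem and the Seiberg--Witten adjunction inequality; that requires $b_2^+(X)>1$, which is not a hypothesis of the theorem (and is exactly the argument the paper uses later for $M(n)$, where it \emph{is} available). A hypothesis-free replacement exists: a rim torus is represented by $\gamma\times S^1$ in the neck $\Sigma\times S^1$, where the symplectic form is a product, so it is a Lagrangian torus, and $\langle c_1(TX),[L]\rangle=0$ for any closed orientable Lagrangian surface $L$ because $TX|_L\cong TL\otimes_{\mathbb{R}}\mathbb{C}$. Second, your evaluation $K_XB_X=K_MB_M+K_NB_N+2$ uses the adjunction \emph{equality} for $B_M$, $B_N$ and the sewn surface $B_X$, hence implicitly assumes these surfaces are symplectic; the theorem assumes only that $B_M$, $B_N$ are smooth surfaces meeting the fibres in one positive point, so this step needs a purely topological derivation (pairing the restrictions of $K_X$ against the relative classes of $B_M'$ and $B_N'$ and controlling the boundary contribution, which is where the $+2$ comes from), not adjunction.
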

The integers $a_i$ are defined as follows: Let $\alpha_1,\ldots,\alpha_c$ denote a basis for the kernel of the homomorphism $i_M\oplus i_N$ on $H_1(\Sigma;\mathbb{Z})$. Then
\begin{equation*}
a_i=\langle C,\alpha_i\rangle.
\end{equation*}

\section{Invariants of the iterated fibre sums $M(n)$}\label{sect invariants}

Let $M'$ denote an algebraic surface with a Lefschetz pencil and $H_1(M';\mathbb{Z})=0$ and $M\rightarrow\mathbb{CP}^1$ the algebraic Lefschetz fibration on the blow-up with fibre of genus $g$. Using two copies $M_1$ and $M_2$ of $M$ we see that the homomorphism
\begin{equation*}
i_{M_1}\oplus i_{M_2}
\end{equation*}
on $H_1(\Sigma;\mathbb{Z})$ maps to zero, hence
\begin{equation*}
H_1(M(2);\mathbb{Z})=0
\end{equation*}
and by induction
\begin{equation*}
H_1(M(n);\mathbb{Z})=0
\end{equation*}
for all $n\geq 2$. We can also consider a twisted fibre sum
\begin{equation*}
M(m,n,C)=M(m)\#_{\Sigma=\Sigma}M(n),
\end{equation*}
defined by a gluing diffeomorphism that is determined by a cohomology class $C$ in $H^1(\Sigma;\mathbb{Z})$. There is a diffeomorphism
\begin{equation*}
M(m,n,0)\cong M(m+n).
\end{equation*}
The same argument as above shows that
\begin{equation*}
H_1(M(m,n,C);\mathbb{Z})=0
\end{equation*}
for all $m,n\geq 1$. 

More specifically, if $M'$ is simply-connected, then the complement of the tubular neighbourhood of a general fibre in $M$ is also simply-connected, because the meridian to the surface bounds an embedded disk, coming from one of the exceptional spheres. Hence all fibre sums $M(n)$ and $M(m,n,C)$ are simply-connected.

The Euler characteristic and signature of the fibre sum $X=M(n)$ are given by
\begin{align*}
e(X)&=ne(M)+(n-1)(4g-4)\\
\sigma(X)&=n\sigma(M),
\end{align*}
and similarly for the twisted fibre sum. To describe the second homology of $M(2)$ we choose in $M$ for the surface $B_M$ one of the exceptional spheres, which form sections for the fibration. In $X=M(2)$ these spheres sew together to define a sphere $B_X$ of self-intersection $-2$. According to the second Lefschetz Hyperplane theorem the vanishing cycles generate the first homology of the general fibre. By our choice of gluing, the corresponding vanishing disks pair up in both summands of the fibre sum to define a set of vanishing classes given by embedded $(-2)$-spheres, called vanishing spheres. With Proposition \ref{gen set for S'X} we get:
\begin{prop}
There exists a basis $S_1,\ldots,S_{2g}$ for the group $S'(M(2))$ of vanishing surfaces which are linear combinations of embedded vanishing spheres and rim tori.
\end{prop}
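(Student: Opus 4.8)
The plan is to realize the statement as a direct application of Proposition \ref{gen set for S'X}, with the vanishing cycles of the Lefschetz fibration playing the role of the generating set for $\ker(i_{M_1}\oplus i_{M_2})$ and the resulting embedded spheres playing the role of the vanishing surfaces $\tilde{S}_j$. First I would pin down the rank $c$. Since $M'$ is simply-connected we have $H_1(M;\mathbb{Z})=0$, so the inclusion-induced map $i_M\colon H_1(\Sigma_M;\mathbb{Z})\rightarrow H_1(M;\mathbb{Z})$ is identically zero, and the same holds for both copies $M_1,M_2$. Consequently $i_{M_1}\oplus i_{M_2}$ is the zero homomorphism and its kernel is all of $H_1(\Sigma;\mathbb{Z})\cong\mathbb{Z}^{2g}$. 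Hence $c=2g$, and by Theorem \ref{formula H^2 no torsion} the group $S'(M(2))$ has rank $2g$; any normal form basis therefore consists of exactly $2g$ vanishing surfaces $S_1,\ldots,S_{2g}$.

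Next I would exhibit the generating set and identify the associated vanishing surfaces. By the second Lefschetz Hyperplane theorem the vanishing cycles $\gamma_1,\ldots,\gamma_k$ generate $H_1(\Sigma;\mathbb{Z})$, so they form a (generally redundant) generating set for the kernel $\ker(i_{M_1}\oplus i_{M_2})$. Each $\gamma_j$ bounds a vanishing disk in $M_1$ and a corresponding vanishing disk in $M_2$; after deleting the collars lying in the tubular neighbourhoods of the fibres, these become disks in the complements bounding the push-off of $\gamma_j$, and because the gluing identifies the vanishing cycles they sew together to a closed embedded sphere $V_j$. As each vanishing disk has self-intersection $-1$ in the framing normal to the cycle inside the fibre, the sphere $V_j$ has self-intersection $-2$. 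By construction $V_j$ is precisely the vanishing surface $\tilde{S}_j$ of Proposition \ref{gen set for S'X} associated to the generator $\gamma_j$.

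Finally I would invoke Proposition \ref{gen set for S'X} directly: choosing a basis $\alpha_1,\ldots,\alpha_{2g}$ of $\ker(i_{M_1}\oplus i_{M_2})$ and expressing it in terms of the generators $\gamma_j$, each basis vanishing surface $S_i$ of Theorem \ref{formula H^2 no torsion} is a linear combination of the $\tilde{S}_j=V_j$ and certain rim tori, which is exactly the asserted form. The step requiring the most care is the middle one, namely verifying that the abstract vanishing surface attached to a vanishing cycle really is the embedded $(-2)$-sphere obtained by pairing the two Lefschetz thimbles across the fibre sum, and that this sphere is smoothly embedded with the claimed self-intersection. Once this identification and the equality $c=2g$ are in place, the statement follows formally from the two quoted results.
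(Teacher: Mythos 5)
Your proposal is correct and follows essentially the same route as the paper: since $H_1(M;\mathbb{Z})=0$ the kernel of $i_{M_1}\oplus i_{M_2}$ is all of $H_1(\Sigma;\mathbb{Z})$, the vanishing cycles generate it by the second Lefschetz Hyperplane theorem, the gluing pairs the Lefschetz thimbles into embedded $(-2)$-spheres serving as the vanishing surfaces $\tilde{S}_j$, and Proposition \ref{gen set for S'X} then expresses a normal form basis $S_1,\ldots,S_{2g}$ as linear combinations of these spheres and rim tori. Your write-up is somewhat more explicit than the paper's (which compresses this into a short paragraph preceding the proposition), but no step differs in substance.
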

Hence the intersection form of $X=M(2)$ looks like
\begin{equation*}
H_2(M(2);\mathbb{Z})=P(M_1)\oplus P(M_2)\oplus \bigoplus_{i=1}^{2g}\left(\begin{array}{cc} S_i^2 &1 \\ 1& 0 \\ \end{array}\right)\oplus\left(\begin{array}{cc} -2 &1 \\ 1& 0 \\ \end{array}\right).
\end{equation*}
By induction we get
\begin{equation*}
H_2(M(n);\mathbb{Z})=\bigoplus_{i=1}^n P(M_i)\oplus \bigoplus_{i=1}^{2g(n-1)}\left(\begin{array}{cc} S_i^2 &1 \\ 1& 0 \\ \end{array}\right)\oplus\left(\begin{array}{cc} -n &1 \\ 1& 0 \\ \end{array}\right).
\end{equation*}
In the induction step we perturb the fibration on $M(n-1)$ without changing the vanishing cycles such that the vanishing spheres become disjoint from the singular fibres. The terms in the middle part of the formula are the vanishing surface and dual rim tori pairs. The class $B_X$ in $M(n)$ is represented by a symplectic sphere of self-intersection $-n$, which is a section of the fibration. Note that the second Betti number has the value expected from the formula for the Euler characteristic. A similar formula holds for the fibre sums $M(m,n,C)$. However, in this case the vanishing surfaces are in general no longer linear combinations of vanishing spheres and rim tori.

We want to determine the canonical class of the symplectic fibre sum $M(n)$.
\begin{thm}\label{canonical class for M(n) Lefsch} The canonical class of $X=M(n)$ is given by
\begin{equation*}
K_X=\sum_{i=1}^n\overline{K_{M_i}}+(2g-2)B_X+((n-2)+(2g-2)n)\Sigma_X,
\end{equation*}
where 
\begin{equation*}
\overline{K_{M_i}}=(K_M+\Sigma_M)-(2g-2)(B_M+\Sigma_M)\in P(M_i)
\end{equation*}
for all $i=1,\dotsc,n$. 
\end{thm}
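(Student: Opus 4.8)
The plan is to prove the formula by induction on $n$, applying at each stage the general canonical class formula of Theorem~\ref{thm on the canonical class Gompf} to the untwisted splitting $M(n)=M(n-1)\,\#_{\Sigma=\Sigma}\,M$, i.e.\ with gluing class $C=0$. Before starting the induction I would record the intersection numbers for a single copy of $M$. The generic fibre $\Sigma_M$ has genus $g$ and square $0$, so adjunction gives $K_M\cdot\Sigma_M=2g-2$; the section $B_M$ is one of the exceptional spheres, hence $B_M^2=-1$, $B_M\cdot\Sigma_M=1$, and adjunction for a sphere gives $K_M\cdot B_M=-1$. Substituting $K_MB_M=-1$ and $B_M^2=-1$ into the projection formula $\overline{K_M}=K_M-(2g-2)B_M-(K_MB_M-(2g-2)B_M^2)\Sigma_M$ of Theorem~\ref{thm on the canonical class Gompf} yields $\overline{K_M}=K_M-(2g-2)B_M-(2g-3)\Sigma_M$, which is exactly $(K_M+\Sigma_M)-(2g-2)(B_M+\Sigma_M)$; this identifies the summands $\overline{K_{M_i}}$ of the statement and confirms that they lie in $P(M_i)$.

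For the base case $n=2$ I would apply Theorem~\ref{thm on the canonical class Gompf} with $M=N$ and $C=0$. It immediately gives $b_X=2g-2$ and, using the numbers above, $\sigma_X=K_MB_M+K_NB_N+2-(2g-2)(B_M^2+B_N^2)=0+(2g-2)\cdot 2=(2-2)+(2g-2)\cdot 2$, which matches the asserted $\Sigma_X$-coefficient at $n=2$. The projection terms assemble to $\overline{K_{M_1}}+\overline{K_{M_2}}$. What remains is to show that the rim tori contribution $\sum_i r_iR_i$ is absent.

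The vanishing of the coefficients $r_i$ is the heart of the matter. Since the sum is untwisted we have $a_i=\langle C,\alpha_i\rangle=0$, so Theorem~\ref{thm on the canonical class Gompf} reduces the coefficient to $r_i=K_X\cdot S_i$. Moreover the formula of that theorem carries \emph{no} $S_i$-term, and in the normal form basis of Theorem~\ref{formula H^2 no torsion} each $R_i$ is orthogonal to $P(M),P(N),B_X,\Sigma_X$ and to the other pairs, with $R_i^2=0$ and $S_i\cdot R_i=1$; hence $K_X\cdot R_i$ equals the coefficient of $S_i$ in $K_X$, namely $0$. By the Proposition preceding the intersection form of $M(2)$, each vanishing surface $S_i$ is a linear combination of embedded vanishing $(-2)$-spheres and rim tori. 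The canonical class evaluates to zero on each such $(-2)$-sphere (adjunction equality for the symplectic spheres; equivalently $c_1(X)$ restricts trivially to a Lagrangian sphere) and to zero on each rim torus by the previous sentence. Therefore $K_X\cdot S_i=0$ and $r_i=0$ for all $i$, completing the base case.

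For the inductive step I would set $A=M(n-1)$, $B=M$ and feed the induction hypothesis into Theorem~\ref{thm on the canonical class Gompf}. The section $B_A=B_{M(n-1)}$ is the $(-(n-1))$-sphere of Section~\ref{sect invariants}, so $B_A^2=-(n-1)$ and adjunction gives $K_A\cdot B_A=n-3$. Because the induction hypothesis expresses $K_A$ with no vanishing or rim tori component, its orthogonal projection onto $P(A)$ is simply $\overline{K_A}=\sum_{i=1}^{n-1}\overline{K_{M_i}}$, while $\overline{K_B}=\overline{K_{M_n}}$, so together they give $\sum_{i=1}^{n}\overline{K_{M_i}}$. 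The same adjunction argument as above shows $r_i=0$, and $b_X=2g-2$. Finally $\sigma_X=K_AB_A+K_BB_B+2-(2g-2)(B_A^2+B_B^2)=(n-3)-1+2+(2g-2)n=(n-2)+(2g-2)n$, using $-(2g-2)(-(n-1)-1)=(2g-2)n$; this is the asserted $\Sigma_X$-coefficient. Assembling these pieces gives the stated formula for $K_{M(n)}$. The step I expect to require the most care is the vanishing of the $r_i$: one must know that the normal form vanishing surfaces are genuinely spanned by $(-2)$-spheres and rim tori and that $K_X$ annihilates both, since the general Morgan--Szab\'o--Taubes framework otherwise permits nonzero rim tori coefficients.
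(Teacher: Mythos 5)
Your overall strategy is the paper's: induction on $n$, applying Theorem \ref{thm on the canonical class Gompf} to the untwisted sum $M(n)=M(n-1)\#_{\Sigma}M$, and computing $\overline{K_M}$, $\overline{K_N}$, $b_X$ and $\sigma_X$ from $K_MB_M=-1$, $B_M^2=-1$, $K_NB_N=n-3$, $B_N^2=-(n-1)$; all of those computations are correct and agree with the paper (your base case $n=2$ versus the paper's trivial check at $n=1$ is immaterial, and your observation that $K_X\cdot R_i=0$ follows already from the absence of $S_i$-terms in the formula of Theorem \ref{thm on the canonical class Gompf} is a legitimate, even cleaner, route to that half of the claim). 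The gap is in the one step you yourself flag as critical: the vanishing of $r_i=K_X\cdot S_i$, which after your reduction comes down to showing $K_X\cdot V=0$ for each vanishing $(-2)$-sphere $V$. You justify this by ``adjunction equality for the symplectic spheres; equivalently $c_1(X)$ restricts trivially to a Lagrangian sphere''. Nothing in the construction makes $V$ a symplectically embedded sphere, and being symplectic is not ``equivalent'' to being Lagrangian; these are mutually exclusive conditions on a surface, either of which would give the conclusion, but neither of which you establish. The sphere $V$ is glued from two Lefschetz thimbles in the two copies of $M_0$, so the natural expectation is that it is Lagrangian; but proving it is Lagrangian \emph{for the Gompf fibre-sum symplectic form}, i.e.\ the form whose canonical class $K_X$ is the object being computed, requires a genuine matching-cycle argument that you do not supply. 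As written, the crucial step is asserted, not proved.

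The paper closes exactly this gap by a smooth-topological Seiberg--Witten argument that needs no symplectic or Lagrangian property of $V$ at all: $V$ has a dual rim torus $R$ meeting it in a single positive transverse point; smoothing that intersection produces an embedded torus of square zero in the class $V+R$ (since $V^2=-2$, $R^2=0$, $V\cdot R=1$); because $K_X$ is a Seiberg--Witten basic class by Taubes' theorem, the adjunction inequality for square-zero tori forces $K_X\cdot(V+R)=0$ and $K_X\cdot R=0$, whence $K_X\cdot V=0$. If you replace your symplectic/Lagrangian assertion by this argument (or by an honest proof that the matching spheres are Lagrangian for the relevant form), your induction goes through verbatim.
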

\begin{proof} The proof is by induction using the formula in Theorem \ref{thm on the canonical class Gompf}. We first show that all rim tori coefficients are zero. This is equivalent to showing that $K_XS_i=0$ for all vanishing classes $S_i$. Note that the vanishing classes are linear combinations of vanishing spheres and rim tori. Hence we only have to show that $K_XV=0$ for all vanishing spheres $V$ and $K_XT=0$ for all rim tori $T$. The sphere $V$ has a dual rim torus $R$ that intersects it in a single positive transverse point. Smoothing the intersection between $V$ and the rim torus $R$ we get an embedded torus of square zero. By the adjunction inequality and since $K_X$ is a Seiberg-Witten basic class according to a theorem of Taubes \cite{T}, we have for tori of square zero $K_XR=0$ and $K_X(V+R)=0$, hence $K_XV=0$. By the same argument $K_XT=0$. 

We first check the case $n=1$ of the formula we want to prove. We have:
\begin{align*}
K_X&=(K_M+\Sigma_M)-(2g-2)(B_M+\Sigma_M)\\
&+(2g-2)B_M+(-1+(2g-2))\Sigma_M\\
&=K_M.
\end{align*}
Suppose that $n\geq 2$ and the formula is correct for $n-1$. Write $N=M(n-1)$ and consider the fibre sum $X=M\#_{\Sigma_M=\Sigma_N}N$. The surface $B_M$ is an exceptional sphere in $M$ and $B_N$ is a symplectic sphere of self-intersection $-(n-1)$ from the previous step. Using the adjunction formula we have $K_MB_M=-1$, hence we get for $\overline{K_M}$, defined in Theorem \ref{thm on the canonical class Gompf},
\begin{align*}
\overline{K_M}&=K_M-(2g-2)B_M-(-1+(2g-2))\Sigma_M\\
&=(K_M+\Sigma_M)-(2g-2)(B_M+\Sigma_M)\\
&=\overline{K_{M_n}},
\end{align*}
and similarly $K_NB_N=n-3$, hence
\begin{align*}
\overline{K_N}&=K_N-(2g-2)B_N-((n-3)+(2g-2)(n-1))\Sigma_N\\
&=\sum_{i=1}^{n-1}\overline{K_{M_i}}+(2g-2)B_N+((n-3)+(2g-2)(n-1))\Sigma_N\\
&\quad -(2g-2)B_N-((n-3)+(2g-2)(n-1))\Sigma_N\\
&=\sum_{i=1}^{n-1}\overline{K_{M_i}}.
\end{align*}
We also have 
\begin{align*}
b_X&=2g-2\\
\sigma_X&=-1+(n-3)+2-(2g-2)(-1-(n-1))\\
&=(n-2)+(2g-2)n.
\end{align*}
Adding the terms proves the claim.
\end{proof} 
\begin{rem}\label{rem elliptic KM+SigmaM}
Note that for $g=1$ and $M$ equal to the elliptic surface $E(1)$ with general fibre $F$, we have $K_M+\Sigma_M=-F+F=0$. Hence we get the well-known formula $K_X=(n-2)F$ for the canonical class of $X=E(n)$.
\end{rem}
In the general case we have for the (maximal) divisibility of the canonical class:
\begin{cor}\label{divisibility of K_X} The divisibility of the canonical class $K_X$ of $X=M(n)$ is the greatest common divisor of $n-2$ and the divisibility of the class $K_M+\Sigma_M\in H^2(M;\mathbb{Z})$.
\end{cor}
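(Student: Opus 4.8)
The plan is to use the standard fact that, since $X=M(n)$ is a closed oriented $4$-manifold with torsion-free homology, Poincaré duality makes the intersection form unimodular; hence the (maximal) divisibility of $K_X$ equals the greatest common divisor of the intersection numbers $K_X\cdot e$ as $e$ ranges over any basis of $H_2(X;\mathbb{Z})$. I would take $e$ to run over the normal form basis of Theorem \ref{formula H^2 no torsion}, so that the computation reduces to pairing the explicit expression for $K_X$ from Theorem \ref{canonical class for M(n) Lefsch} against each basis vector.

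First I would evaluate these pairings block by block. Since the formula for $K_X$ has vanishing $S_i$- and $R_i$-coefficients, the structure $\left(\begin{smallmatrix} S_i^2 & 1\\ 1 & 0\end{smallmatrix}\right)$ gives $K_X\cdot S_i=K_X\cdot R_i=0$, so the middle summands contribute nothing to the gcd. On $\mathbb{Z}B_X\oplus\mathbb{Z}\Sigma_X$ with $B_X^2=-n$ one computes $K_X\cdot\Sigma_X=2g-2$ and $K_X\cdot B_X=(2g-2)(-n)+((n-2)+(2g-2)n)=n-2$, where the term $(2g-2)n$ cancels. Finally, orthogonality of the splitting gives $K_X\cdot p=\overline{K_{M_j}}\cdot p$ for $p\in P(M_j)$, and since each $\overline{K_{M_j}}$ is the same class $(K_M+\Sigma_M)-(2g-2)(B_M+\Sigma_M)$ transported into the $j$-th summand, the set of these numbers is independent of $j$ and equals $\{\overline{K_M}\cdot p:p\in P(M)\}$. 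Hence the divisibility of $K_X$ is the gcd of $n-2$, of $2g-2$, and of the numbers $\overline{K_M}\cdot p$.

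The remaining step, where the content lies, is to identify $\gcd(2g-2,\{\overline{K_M}\cdot p\})$ with the divisibility of $K_M+\Sigma_M$ inside $H^2(M;\mathbb{Z})$. Since the span of $\Sigma_M$ and $B_M$ carries the unimodular form $\left(\begin{smallmatrix}0&1\\1&-1\end{smallmatrix}\right)$ (using $\Sigma_M^2=0$, $\Sigma_M\cdot B_M=1$ and $B_M^2=-1$), it splits off orthogonally and $H_2(M;\mathbb{Z})=P(M)\oplus\mathbb{Z}\Sigma_M\oplus\mathbb{Z}B_M$. Computing $(K_M+\Sigma_M)\cdot\Sigma_M=2g-2$ and $(K_M+\Sigma_M)\cdot B_M=0$, and noting that $\overline{K_M}$ differs from $K_M+\Sigma_M$ only by a multiple of $B_M+\Sigma_M$, which is orthogonal to $P(M)$, one obtains $\overline{K_M}\cdot p=(K_M+\Sigma_M)\cdot p$ for all $p\in P(M)$. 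Therefore the divisibility of $K_M+\Sigma_M$ is exactly $\gcd(2g-2,\{\overline{K_M}\cdot p\})$, and combining with the previous paragraph yields $\mathrm{div}(K_X)=\gcd(n-2,\mathrm{div}(K_M+\Sigma_M))$, as claimed.

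The main obstacle is precisely this last identification: one must check that the intersection numbers computed in $X$ against the summands $P(M_j)$ genuinely recover the divisibility of $K_M+\Sigma_M$ in $M$, rather than only the divisibility of $\overline{K_M}$ within the possibly non-unimodular sublattice $P(M)$. This is resolved by the orthogonal $\mathbb{Z}$-splitting of $H_2(M;\mathbb{Z})$ above together with the fact that $K_M+\Sigma_M$ pairs to $2g-2$ with $\Sigma_M$ and to $0$ with $B_M$, so that no divisibility information is lost by restricting to $P(M)$ and adjoining the single number $2g-2$.
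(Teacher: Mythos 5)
Your proposal is correct, but it takes a genuinely different route from the paper's own proof. The paper argues the two divisibility inequalities separately and works directly with classes rather than with pairings: in one direction, the greatest common divisor of $n-2$ and the divisibility of $K_M+\Sigma_M$ also divides $2g-2=(K_M+\Sigma_M)\Sigma_M$ by the adjunction formula, and hence divides every term of the formula in Theorem \ref{canonical class for M(n) Lefsch}; in the other direction, the divisibility $\delta$ of $K_X$ divides $2g-2=K_X\Sigma_X$ and $n-2=K_XB_X$ (the latter via adjunction for the symplectic sphere $B_X$ of square $-n$), and, by componentwise divisibility in the direct-sum decomposition, $\delta$ divides each class $\overline{K_{M_i}}$ and therefore divides $K_M+\Sigma_M$. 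You instead package both directions into a single lattice computation: unimodularity of the intersection form of $X$ identifies the divisibility of $K_X$ with the gcd of its pairings against the normal form basis, and then the orthogonal splitting $H_2(M;\mathbb{Z})=\mathbb{Z}\Sigma_M\oplus\mathbb{Z}B_M\oplus P(M)$ (available because the block spanned by $\Sigma_M$ and $B_M$ is itself unimodular), together with the computations $(K_M+\Sigma_M)\cdot B_M=0$ and $(K_M+\Sigma_M)\cdot\Sigma_M=2g-2$, converts $\gcd\bigl(2g-2,\{\overline{K_M}\cdot p\}\bigr)$ into the divisibility of $K_M+\Sigma_M$. What your approach buys is a uniform mechanism that makes fully explicit why no divisibility information is lost in restricting attention to $P(M)$ and the classes $\Sigma_X$, $B_X$ --- a point the paper handles only implicitly through componentwise divisibility; what the paper's approach buys is economy, since it needs nothing beyond the adjunction formula and the direct-sum structure, with no appeal to lattice duality or to a splitting of $H_2(M;\mathbb{Z})$.
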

\begin{proof} The greatest common divisor of $n-2$ and the divisibility of $K_M+\Sigma_M$ divides $K_X$. This follows because this number also divides $2g-2=(K_M+\Sigma_M)\Sigma_M$ by the adjunction formula. The number then divides all terms in the formula in Theorem \ref{canonical class for M(n) Lefsch}. 

Conversely, let $\delta$ denote the divisibility of $K_X$. It is clear that $\delta$ divides $2g-2$, since $K_X\Sigma_X=2g-2$ by the adjunction formula. We have
\begin{equation*}
K_XB_X=n-2,
\end{equation*}
since $B_X$ is a symplectic sphere of self-intersection $-n$. This implies that $\delta$ also divides $n-2$. The integer $\delta$ also has to divide every term $\overline{K_{M_i}}$. This shows that it divides the class $K_M+\Sigma_M$, proving the claim.
\end{proof} 
\begin{rem}
Since the complex curve $\Sigma_M$ in the blow-up $M=M'\#r{\overline{\mathbb{CP}}{}^{2}}\rightarrow\mathbb{CP}^1$ is the proper transform of a curve $\Sigma_{M'}$ in $M'$, the divisibility of $K_M+\Sigma_M$ is equal to the divisibility of $K_{M'}+\Sigma_{M'}$. In fact, both classes are equal because the canonical class and the class of the proper transform are given by
\begin{align*}
\Sigma_M&=\Sigma_{M'}-E_1-\dotsc-E_r\\
K_M&=K_{M'}+E_1+\dotsc+E_r,
\end{align*}
where $E_i$ denotes the exceptional spheres. 
\end{rem}
We can also determine the canonical class of the fibre sum $M(m,n,C)$.
\begin{thm}
The canonical class of $X=M(m,n,C)$ is given by
\begin{equation*}
K_X=\sum_{i=1}^{m+n}\overline{K_{M_i}}+\sum_{i=1}^{2g}r_iR_i+(2g-2)B_X+((m+n-2)+(2g-2)(m+n))\Sigma_X,
\end{equation*}
where 
\begin{equation*}
\overline{K_{M_i}}=(K_M+\Sigma_M)-(2g-2)(B_M+\Sigma_M)\in P(M_i)
\end{equation*}
for all $i=1,\dotsc,m+n$ and
\begin{equation*}
r_i=-a_i((2g-1)n-1)
\end{equation*}
for the coefficients $a_i=\langle C,\alpha_i\rangle$, determined by the class $C$ for a basis $\alpha_1,\ldots,\alpha_{2g}$ of $H_1(\Sigma;\mathbb{Z})$.
\end{thm}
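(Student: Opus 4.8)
The plan is to apply the formula of Theorem \ref{thm on the canonical class Gompf} directly to $X = M(m)\#_{\Sigma=\Sigma}M(n)$, taking the first summand to be $M = M(m)$ and the second to be $N = M(n)$, and to feed in the canonical classes of $M(m)$ and $M(n)$ already supplied by Theorem \ref{canonical class for M(n) Lefsch}. First I would fix the data entering that formula: the sections $B_M$ and $B_N$ are the symplectic spheres of self-intersection $-m$ and $-n$ from Section \ref{sect invariants}, so $B_M^2 = -m$ and $B_N^2 = -n$, and from the computation $K_{M(k)}B_{M(k)} = k-2$ in the proof of Corollary \ref{divisibility of K_X} we have $K_M B_M = m-2$ and $K_N B_N = n-2$. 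Since both summands are simply connected, the kernel of $i_{M(m)}\oplus i_{M(n)}$ on $H_1(\Sigma;\mathbb{Z})$ is all of $H_1(\Sigma;\mathbb{Z})$, which is why the rim tori are indexed by a basis $\alpha_1,\dots,\alpha_{2g}$ and $a_i = \langle C,\alpha_i\rangle$.

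For the terms insensitive to the twisting I would reuse the computation from the induction step of Theorem \ref{canonical class for M(n) Lefsch}. Substituting the formula for $K_{M(m)}$ into the definition of $\overline{K_M}$ and using $K_M B_M - (2g-2)B_M^2 = (m-2)+(2g-2)m$, the $B_M$- and $\Sigma_M$-contributions cancel, leaving $\overline{K_M} = \sum_{i=1}^{m}\overline{K_{M_i}}$; the identical argument gives $\overline{K_N} = \sum_{i=1}^{n}\overline{K_{M_i}}$, so that $\overline{K_M}+\overline{K_N} = \sum_{i=1}^{m+n}\overline{K_{M_i}}$. Plugging the intersection data into Theorem \ref{thm on the canonical class Gompf} then yields $b_X = 2g-2$ and $\sigma_X = (m-2)+(n-2)+2-(2g-2)(-m-n) = (m+n-2)+(2g-2)(m+n)$, matching the stated formula.

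The only genuinely new ingredient is the rim tori coefficient, and this is where I expect the real work to lie. Theorem \ref{thm on the canonical class Gompf} gives $r_i = K_{X_0}S_i - a_i\bigl(K_N B_N + 1 - (2g-2)B_N^2\bigr)$, where $X_0 = M(m,n,0)\cong M(m+n)$ is the untwisted sum. The key point I would exploit is that Theorem \ref{canonical class for M(n) Lefsch} already asserts that all rim tori coefficients of $M(m+n)$ vanish, which in the relevant basis means $K_{X_0}S_i = 0$. Inserting $K_N B_N = n-2$ and $B_N^2 = -n$ then gives $r_i = -a_i\bigl((n-2)+1+(2g-2)n\bigr) = -a_i((2g-1)n-1)$. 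Summing all contributions would complete the proof.

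The main obstacle is bookkeeping rather than conceptual. One must verify that the basis of rim tori dual to $\alpha_1,\dots,\alpha_{2g}$, together with the dual vanishing surfaces $S_i$ used to evaluate $K_{X_0}S_i$, is indeed the one transported from $X$ to $X_0$ in the sense prescribed by Theorem \ref{thm on the canonical class Gompf}; only then may one legitimately read off $K_{X_0}S_i = 0$ from Theorem \ref{canonical class for M(n) Lefsch}. One must also keep track of the asymmetry in Theorem \ref{thm on the canonical class Gompf}, which singles out the second factor $N = M(n)$ and is thereby responsible for the appearance of $n$ rather than $m$ in the coefficient $r_i$.
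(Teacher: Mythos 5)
Your proposal is correct and follows exactly the paper's route: the paper's own (one-sentence) proof likewise applies Theorem \ref{thm on the canonical class Gompf} with $X_0 = M(m,n,0)\cong M(m+n)$, so that $K_{X_0}S_i=0$ by Theorem \ref{canonical class for M(n) Lefsch}, and uses that $B_N$ is a symplectic sphere of self-intersection $-n$, which yields $r_i=-a_i((2g-1)n-1)$; the remaining terms are handled by the same cancellation as in the induction step of Theorem \ref{canonical class for M(n) Lefsch}. You have simply written out the computations the paper leaves implicit, and your numerical checks ($K_{M(k)}B_{M(k)}=k-2$, $\sigma_X$, $b_X$) all agree with the stated formula.
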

\begin{proof}
The proof follows from the formula in Theorem \ref{thm on the canonical class Gompf}, because $X_0=M(m+n)$, hence $K_{X_0}S_i=0$, and $B_N$ is a symplectic sphere of self-intersection $-n$.
\end{proof}
The following is an immediate consequence which we will use later in deriving an obstruction to extending diffeomorphisms: 
\begin{cor}\label{div for M(m,n,C)}
The divisibility of the canonical class $K_X$ of $X=M(m,n,C)$ is the greatest common divisor of $m+n-2$, $a((2g-1)n-1)$ and the class $K_M+\Sigma_M$, where $a$ denotes the divisibility of the class $C$.
\end{cor}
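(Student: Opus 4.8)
The plan is to mimic the two-sided divisibility argument used in the proof of Corollary \ref{divisibility of K_X}, now applied to the formula for $K_X$ in the case $X=M(m,n,C)$. Let me write $d$ for the greatest common divisor of $m+n-2$, $a((2g-1)n-1)$, and the divisibility of $K_M+\Sigma_M$, and let $\delta$ denote the actual divisibility of $K_X$. I must show $d\mid\delta$ and $\delta\mid d$.

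For the direction $d\mid\delta$, I would check that $d$ divides every coefficient appearing in the normal form expression for $K_X$. Since $d$ divides the divisibility of $K_M+\Sigma_M$, it divides each $\overline{K_{M_i}}=(K_M+\Sigma_M)-(2g-2)(B_M+\Sigma_M)$; here I use that $d$ also divides $2g-2=(K_M+\Sigma_M)\Sigma_M$ by the adjunction formula, exactly as in the proof of Corollary \ref{divisibility of K_X}, so that the whole $P(M_i)$-term is divisible by $d$. The coefficient $b_X=2g-2$ is divisible by $d$ for the same reason. For the rim-tori coefficients $r_i=-a_i((2g-1)n-1)$, I note that each $a_i=\langle C,\alpha_i\rangle$ is divisible by $a$ (the divisibility of $C$), so $d\mid a((2g-1)n-1)$ gives $d\mid r_i$. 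Finally the $\Sigma_X$-coefficient $(m+n-2)+(2g-2)(m+n)$ is a combination of $m+n-2$ and $2g-2$, both divisible by $d$. Hence $d\mid K_X$, so $d\mid\delta$.

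For the converse $\delta\mid d$, I would extract the three generators of $d$ by pairing $K_X$ against suitable homology classes, using that $\delta$ divides $K_X\cdot Z$ for every $Z$. Pairing with $\Sigma_X$ gives $K_X\Sigma_X=2g-2$, so $\delta\mid 2g-2$; pairing with the section sphere $B_X$ of self-intersection $-n$ gives $K_XB_X=m+n-2$, so $\delta\mid m+n-2$. Pairing against each vanishing surface $S_i$ recovers the rim-tori coefficient $r_i=-a_i((2g-1)n-1)$ up to the intersection data, and combining these (together with the divisibility of $C$ realized through the $a_i$) yields $\delta\mid a((2g-1)n-1)$; and $\delta$ must divide each $\overline{K_{M_i}}$, hence divide $K_M+\Sigma_M$ as in Corollary \ref{divisibility of K_X}. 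Since $\delta$ divides all three generators of $d$, we get $\delta\mid d$, and the two divisibilities agree.

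The main obstacle I expect is the rim-tori term. In Corollary \ref{divisibility of K_X} the class $C=0$, so all $r_i$ vanish and the argument only involves $m+n-2$ and $K_M+\Sigma_M$; here the nonzero $r_i=-a_i((2g-1)n-1)$ must be handled carefully. For the $d\mid\delta$ direction this is routine once one knows $a\mid a_i$, but for the $\delta\mid d$ direction one must argue that pairing $K_X$ against the vanishing surfaces, after accounting for the orthogonality relations in the normal form basis of Theorem \ref{formula H^2 no torsion}, genuinely forces $\delta$ to divide $a((2g-1)n-1)$ rather than some proper divisor or multiple. The cleanest route is to observe that the collection $\{a_i\}$ has greatest common divisor equal to $a$ (the divisibility of $C$), so that the $\gcd$ of the $r_i$ is precisely $a((2g-1)n-1)$, and $\delta$ dividing each $r_i=K_XS_i$ therefore divides this $\gcd$.
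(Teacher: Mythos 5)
Your proposal is correct and matches the paper's (largely implicit) argument: the paper treats the corollary as an immediate consequence of the canonical class formula for $M(m,n,C)$, and the verification is exactly the two-sided gcd argument of Corollary \ref{divisibility of K_X} that you reproduce, including the key observations that $K_XS_i=r_i$, that $\gcd_i(a_i)=a$, and that divisibility of $K_M+\Sigma_M$ forces divisibility of $2g-2$ via adjunction. The only slip is cosmetic: in $X=M(m,n,C)$ the section sphere $B_X$ has self-intersection $-(m+n)$, not $-n$, though your stated value $K_XB_X=m+n-2$ is the correct one.
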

Note that the divisibility of the class $C$ is the greatest common divisor of the integers $a_i$. We can use this corollary for example to determine when the manifold $M(m,n,C)$ is spin.
\begin{rem}
One can show that the divisibility is also equal to the greatest common divisor of $m+n-2$, $a((2g-1)m-1)$ and $K_M+\Sigma_M$, as required by symmetry.
\end{rem}
Finally, we determine some of the Seiberg-Witten invariants of the manifolds $M(n)$ in the special case that $M'$ is a minimal surface of general type. Surfaces of general type are algebraic. This follows from Theorem 6.2 in Chapter IV and Theorem 2.2 in Chapter VII in  \cite{BHPV}. We have the following lemma. 
\begin{lem}\label{lem genus general type at least 2}
Let $M'$ be a smooth minimal surface of general type embedded in some complex projective space and $\Sigma_{M'}$ a transverse hyperplane section. Then the genus of $\Sigma_{M'}$ is at least two.
\end{lem}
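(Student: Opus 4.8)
The plan is to combine the adjunction formula on the surface $M'$ with the fact that the canonical class of a minimal surface of general type is nef. Since $\Sigma_{M'}$ is a transverse hyperplane section, it is a smooth connected curve (smoothness by transversality, connectedness by the Bertini/Lefschetz connectedness of hyperplane sections of an irreducible projective variety of dimension $\geq 2$), and it represents the hyperplane class $H\in H^2(M';\mathbb{Z})$ of the given embedding in $\mathbb{CP}^N$. The adjunction formula for a smooth curve in a surface then gives
\[
2g-2=\Sigma_{M'}\cdot(\Sigma_{M'}+K_{M'})=\Sigma_{M'}^2+K_{M'}\cdot\Sigma_{M'}.
\]

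First I would bound the self-intersection term from below. Because $\Sigma_{M'}$ represents the very ample class $H$ coming from the projective embedding, $\Sigma_{M'}^2$ equals the degree of $M'$ in $\mathbb{CP}^N$, which is a strictly positive integer; hence $\Sigma_{M'}^2\geq 1$. This step uses only that $\Sigma_{M'}$ is a hyperplane section and nothing about the Kodaira dimension.

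Next I would show $K_{M'}\cdot\Sigma_{M'}\geq 0$. The crucial input here is that for a minimal surface of general type the canonical class $K_{M'}$ is nef; this is where I would invoke the classification results in \cite{BHPV}. Since $H$ is ample, the intersection of the nef class $K_{M'}$ with the ample class $H$ is non-negative, so $K_{M'}\cdot\Sigma_{M'}\geq 0$. Combining the two estimates yields $2g-2\geq 1$, and since $g$ is a non-negative integer this forces $g\geq 2$, as claimed.

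I expect the nef-ness of $K_{M'}$ to be the only genuine obstacle, and the only point where minimality and the general-type hypothesis actually enter: the rest is a direct application of adjunction and the positivity of the degree. The main care to take is to use nef-ness of $K_{M'}$ rather than merely $K_{M'}^2>0$, since it is the intersection $K_{M'}\cdot\Sigma_{M'}$ against the ample class, not the self-intersection of $K_{M'}$, that controls the genus bound.
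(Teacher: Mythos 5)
Your proof is correct and takes essentially the same route as the paper: the adjunction formula, $\Sigma_{M'}^2=\deg M'\geq 1$, and positivity of $K_{M'}\cdot\Sigma_{M'}$ coming from minimality and general type, citing \cite{BHPV}. The only (harmless) difference is that the paper invokes the sharper statement that $K_{M'}C\geq 0$ for all curves with equality only for rational $(-2)$-curves, which gives $K_{M'}\Sigma_{M'}\geq 1$ and hence $2g-2\geq 2$ outright, whereas you use plain nef-ness plus integrality of $g$; note that the paper's strict inequality $K_{M'}\Sigma_{M'}>0$ is the form that gets reused later (in the proof that the canonical class is the only basic class meeting the fibre), so the stronger version is worth recording.
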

\begin{proof}
We have $\Sigma_{M'}^2=r\geq 1$, where $r$ denotes the degree of $M'$. Since $M'$ is minimal and of general type, we have $K_{M'}C\geq 0$ for all smooth curves $C$ with equality if and only if $C$ is a rational $(-2)$-curve \cite[Chapter VII, Corollary 2.3]{BHPV}. Hence $K_{M'}\Sigma_{M'}\geq 1$ and the adjunction formula implies that the genus of $\Sigma_{M'}$ is at least two.
\end{proof}
For simplicity we also assume in the following that $b_2^+(M')>1$. We can therefore use the formula from Theorem \ref{thm MST}. First note that the Poincar\'e dual of a basic class of $M(n)$ has no rim tori component: This follows as before in the proof of Theorem \ref{canonical class for M(n) Lefsch} from the adjunction inequality of Seiberg-Witten theory because the vanishing surfaces are linear combinations of  vanishing spheres and rim tori; see \cite{FS2} for a similar argument. Hence the left hand side in Theorem \ref{thm MST} has only one term. It follows from \cite{MuW} that the intersection of a basic class of $M(n)$ with $\Sigma_X$ is either equal to $\pm(2g-2)$ or zero.
\begin{thm}
Suppose that $M'$ is a minimal algebraic surface of general type with $H_1(M';\mathbb{Z})=0$ and $b_2^+>1$. Then the only Seiberg-Witten basic class up to sign of $X=M(n)$, which has non-zero intersection with the fibre $\Sigma_X$, is the canonical class $K_X$.
\end{thm}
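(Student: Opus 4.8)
The plan is to argue by induction on $n\geq 2$, writing $X=M(n)=M\#_{\Sigma_M=\Sigma_N}N$ with $N=M(n-1)$ and combining the Morgan-Szab\'o-Taubes formula (Theorem \ref{thm MST}) with the description of the canonical class in Theorem \ref{thm on the canonical class Gompf} and Theorem \ref{canonical class for M(n) Lefsch}. Let $k$ be a basic class of $X$ with $k\cdot\Sigma_X\neq 0$. By the remarks preceding the theorem, $PD(k)$ has no rim torus component, and by \cite{MuW} we have $k\cdot\Sigma_X=\pm(2g-2)$; replacing $k$ by $-k$ if necessary (which preserves the property of being basic) I may assume $k\cdot\Sigma_X=2g-2$, so that $PD(k)=p_M+p_N+(2g-2)B_X+\beta_X\Sigma_X$ in a normal form basis. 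Since $X$, $M$ and $N$ are symplectic with $b_2^+>1$, they are of simple type, so the left-hand side of Theorem \ref{thm MST} collapses to the single term $SW_X(k)\neq 0$ and the right-hand side has at most one non-zero summand. Hence there must exist basic classes $l_1$ of $M$ and $l_2$ of $N$ with $l_1\in\mathcal{K}_M(k)$, $l_2\in\mathcal{K}_N(k)$ and $\beta_X=\beta_M+\beta_N+2$.

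The heart of the argument is to show that $l_1$ and $l_2$ are forced to be the canonical classes. Membership in $\mathcal{K}_M(k)$ requires the $B_M$-coefficient of $PD(l_1)$ to be $2g-2$, which in the normal form basis equals $l_1\cdot\Sigma_M=2g-2$, the maximal value permitted by the adjunction inequality. I would compute this intersection explicitly on the blow-up $M=M'\#r{\overline{\mathbb{CP}}{}^{2}}$. By the blow-up formula and Taubes' theorem, the basic classes of $M$ are $\eta K_{M'}+\sum_{i=1}^r s_iE_i$ with $\eta,s_i\in\{\pm 1\}$, and since $\Sigma_M=\Sigma_{M'}-E_1-\dots-E_r$ with $E_i\cdot\Sigma_M=1$, such a class meets $\Sigma_M$ in $\eta\,K_{M'}\Sigma_{M'}+\sum_i s_i$. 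Here the strict inequality $K_{M'}\Sigma_{M'}=2g-2-r\geq 1$, coming from the proof of Lemma \ref{lem genus general type at least 2}, enters decisively: because this coefficient is strictly positive, the value $2g-2$ is attained only for $\eta=1$ and all $s_i=1$, that is, only for $l_1=K_M$. For the factor $N=M(n-1)$ I treat $n=2$ and $n\geq 3$ separately: when $N=M$ the same blow-up computation gives $l_2=K_M=K_N$, while for $n\geq 3$ the induction hypothesis says the only basic classes of $M(n-1)$ with non-zero fibre intersection are $\pm K_N$, and the positivity $l_2\cdot\Sigma_N=2g-2$ selects $l_2=K_N$.

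With $l_1=K_M$ and $l_2=K_N$ determined, the remaining data of $PD(k)$ are pinned down: matching $P(M)$- and $P(N)$-parts forces $p_M=\overline{K_M}$ and $p_N=\overline{K_N}$, the integers $\beta_M,\beta_N$ are the $\Sigma$-coefficients of $K_M,K_N$, and $\beta_X=\beta_M+\beta_N+2$. Comparing with the canonical class formula of Theorem \ref{thm on the canonical class Gompf}, whose $\Sigma_X$-coefficient $\sigma_X=K_MB_M+K_NB_N+2-(2g-2)(B_M^2+B_N^2)$ is exactly $\beta_M+\beta_N+2$, and recalling that the rim torus coefficients of $K_X$ vanish for $M(n)$ (established in the proof of Theorem \ref{canonical class for M(n) Lefsch}) just as the rim component of $PD(k)$ does, I conclude $k=K_X$. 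The case $k\cdot\Sigma_X=-(2g-2)$ yields $-K_X$, so $\pm K_X$ are the only basic classes with non-zero fibre intersection.

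I expect the main obstacle to be the uniqueness step in the second paragraph: verifying that exactly one basic class of each fibre summand realizes the maximal intersection $2g-2$ with the fibre. This is precisely where minimality and general type are indispensable, through the strict inequality $K_{M'}\Sigma_{M'}\geq 1$; without it the sign $\eta$ would be unconstrained and uniqueness would fail. A secondary point requiring care is the bookkeeping showing that the characteristic class singled out by the formula has $P(M)$-, $P(N)$-, $B_X$- and $\Sigma_X$-components agreeing with those of $K_X$ in Theorem \ref{thm on the canonical class Gompf}, so that the two classes genuinely coincide.
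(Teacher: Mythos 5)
Your proposal is correct and follows essentially the same route as the paper: vanishing of rim-torus components for basic classes, the constraint $k\cdot\Sigma_X=\pm(2g-2)$ from \cite{MuW}, and the Morgan--Szab\'o--Taubes formula combined with the blow-up formula and Witten's computation (using the strict positivity $K_{M'}\Sigma_{M'}\geq 1$) to show $K_M$ is the unique basic class of $M$ meeting the fibre in $2g-2$; the paper treats $M(2)$ and says the general case "follows similarly," which is exactly the explicit induction on $n$ that you supply. One minor slip: the collapse of the left-hand side of Theorem \ref{thm MST} to the single term $SW_X(k)$ is not a consequence of simple type but of the fact you state immediately before it, namely that basic classes of $M(n)$ have no rim-torus component, so every other element of $\mathcal{K}(k)$ fails to be basic; simple type is only relevant to the right-hand side.
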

\begin{proof}
We consider the argument for $M(2)$. The general case follows similarly. 
By the blow-up formula for the Seiberg-Witten invariants \cite{FS} and the calculation of the basic classes for a minimal surface of general type \cite{W}, the basic classes of $M=M'\#r{\overline{\mathbb{CP}}{}^{2}}$ are given by 
\begin{equation*}
L=\pm (K_{M'}\pm E_1\pm\ldots \pm E_r).
\end{equation*}
Note that on the right hand side of the Morgan-Szab\'o-Taubes formula only the basic classes $L$ on $M$ with $L\Sigma_M=2g-2$ are relevant. Suppose that $i$ exceptional sphere summands in the bracket of $L$ have negative sign and $r-i$ positive. We have 
\begin{equation*}
2g-2=K_{M'}\Sigma_{M'}+r.
\end{equation*}
Since $M'$ is minimal and of general type and $\Sigma_{M'}$ is a smooth complex curve of non-zero genus, we have $K_{M'}\Sigma_{M'}>0$ by the proof of Lemma \ref{lem genus general type at least 2}, which implies $i\leq r<2g-2$. We get:
\begin{equation*}
|L\Sigma_M|=|K_{M'}\Sigma_{M'}+r-2i|=|2g-2-2i|.
\end{equation*}
This can be $2g-2$ if and only if $i=0$. Hence the relevant basic class of $M$ for the right hand side of the Morgan-Szab\'o-Taubes formula is $L=K_M$. We then see that the only basic class of $M(2)$ which has intersection $2g-2$ with the fibre is the canonical class.
\end{proof}

\section{Extension of diffeomorphisms}

Suppose that $M'$ is a minimal surface of general type with $H_1(M';\mathbb{Z})=0$ and $b_2^+>1$. Let $M(n)$ be a fibre sum as above and $\Sigma$ a general fibre in $M(n)$ with tubular neighbourhood $\nu\Sigma$. The neighbourhood has a natural framing $\Sigma\times D^2$, given by the fibration. We are interested in orientation preserving self-diffeomorphisms $\psi$ of $\partial\nu\Sigma$ which preserve the circle fibration and cover the identity. As before, the diffeomorphism $\psi$ has up to isotopy the form
\begin{align*}
\psi\colon\Sigma\times S^1&\longrightarrow\Sigma\times S^1\\
(x,\alpha)&\mapsto (x,C(x)\cdot\alpha),
\end{align*}
where $C\colon\Sigma\rightarrow S^1$ is a smooth map. The diffeomorphism is determined up to isotopy by the cohomology class $C^*d\alpha\in H^1(\Sigma;\mathbb{Z})$, denoted by $C$. 

We want to derive an obstruction so that $\psi$ does not extend to an orientation preserving self-diffeomorphism of the complement $M(n)\setminus\text{int}\,\nu\Sigma$.   
\begin{thm}\label{obstruction diffeom}
Let $d$ denote the divisibility of the class $K_{M'}+\Sigma_{M'}$ and $a$ the divisibility of the class $C$. If $d$ does not divide $a(n-1)$, then $\psi$ does not extend to an orientation preserving self-diffeomorphism of the complement $M(n)\setminus\text{int}\,\nu\Sigma$.
\end{thm}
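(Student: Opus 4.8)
The plan is to reduce the non-extension statement to the divisibility computations of Corollaries~\ref{divisibility of K_X} and~\ref{div for M(m,n,C)}, using the principle that extending $\psi$ over the complement lets one reglue a fibre sum without changing its diffeomorphism type. Write $M(n)_0=M(n)\setminus\mathrm{int}\,\nu\Sigma$, let $\phi_0$ be the untwisted gluing used to form $M(m+n)=M(m,n,0)$, and recall that the twisted fibre sum is $M(m,n,C)=M(m)_0\cup_{\psi\circ\phi_0}M(n)_0$, where $\psi$ is precisely the orientation preserving twist by $C$ on $\partial M(n)_0=\partial\nu\Sigma$ appearing in the statement. If $\psi$ extended to an orientation preserving self-diffeomorphism $\Psi$ of $M(n)_0$, then the identity on $M(m)_0$ together with $\Psi$ on $M(n)_0$ would define a diffeomorphism
\[
M(m+n)=M(m)_0\cup_{\phi_0}M(n)_0\;\cong\;M(m)_0\cup_{\psi\circ\phi_0}M(n)_0=M(m,n,C),
\]
valid simultaneously for every $m\geq 1$. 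Since this diffeomorphism is the identity on $M(m)_0$, it fixes the push-off of $\Sigma$ in the gluing region and hence carries the general fibre class of $M(m+n)$ to that of $M(m,n,C)$.

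First I would promote this to an equality of divisibilities of canonical classes. An orientation preserving diffeomorphism permutes the set of Seiberg--Witten basic classes and preserves the divisibility of individual classes, and by the previous paragraph it matches the two fibre classes. By the Morgan--Szab\'o--Taubes computation at the end of Section~\ref{sect invariants}, the only basic class up to sign of such a fibre sum having non-zero intersection with the fibre is the canonical class. Hence the diffeomorphism must carry $\{\pm K_{M(m+n)}\}$ to $\{\pm K_{M(m,n,C)}\}$, so that
\[
\mathrm{div}\,K_{M(m+n)}=\mathrm{div}\,K_{M(m,n,C)}\qquad\text{for every }m\geq 1.
\]

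Next I would insert the two divisibility formulas. By Corollary~\ref{divisibility of K_X} the left-hand side equals $\gcd(m+n-2,d)$, where $d$ is the divisibility of $K_M+\Sigma_M$, which coincides with that of $K_{M'}+\Sigma_{M'}$. By Corollary~\ref{div for M(m,n,C)} the right-hand side equals $\gcd(m+n-2,\,a((2g-1)n-1),\,d)$. Equating the two forces $\gcd(m+n-2,d)$ to divide $a((2g-1)n-1)$. Here I am free to choose $m$: since any multiple of $d$ may be added, there is $m\geq 1$ with $d\mid (m+n-2)$, and then $\gcd(m+n-2,d)=d$, so $d\mid a((2g-1)n-1)$. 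Because $d\mid 2g-2$ by the adjunction identity $(K_M+\Sigma_M)\Sigma_M=2g-2$, we have $2g-1\equiv 1\pmod d$, hence $(2g-1)n-1\equiv n-1\pmod d$ and $a((2g-1)n-1)\equiv a(n-1)\pmod d$. Therefore $d\mid a(n-1)$, contradicting the hypothesis; this proves that $\psi$ cannot extend when $d\nmid a(n-1)$.

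The main obstacle is the diffeomorphism-invariance step. What needs care is that the uniqueness-of-the-canonical-class statement applies to the twisted sum $M(m,n,C)$, and not merely to $M(n)$: one should verify that the Morgan--Szab\'o--Taubes argument of Section~\ref{sect invariants}, which uses only the basic classes of the building block $M$ together with the single-summand property of the formula in Theorem~\ref{thm MST}, carries over verbatim in the presence of the twist $C$ (the twist only adds rim-torus components, which have zero intersection with the fibre). Once that characterization is in place, the matching of fibre classes in the first paragraph makes the divisibility genuinely comparable, and the remainder is the elementary arithmetic above together with the standard regluing principle.
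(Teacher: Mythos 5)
Your overall strategy coincides with the paper's: form the twisted sums $M(m,n,C)$, use the hypothetical extension $\Psi$ to produce orientation preserving diffeomorphisms $M(m+n)\cong M(m,n,C)$ for all $m\geq 1$, compare divisibilities of canonical classes via Corollaries \ref{divisibility of K_X} and \ref{div for M(m,n,C)}, choose $m$ so that $d$ divides $m+n-2$, and reduce $a((2g-1)n-1)$ to $a(n-1)$ modulo $d$ using $d\mid 2g-2$; this is exactly the paper's argument and arithmetic. However, there is a genuine gap at the step you yourself flag as ``the main obstacle'', and your proposed repair does not work. To conclude that the diffeomorphism carries $\{\pm K_{M(m+n)}\}$ to $\{\pm K_{M(m,n,C)}\}$ you invoke the uniqueness-of-the-basic-class theorem on \emph{both} sides, in particular for the twisted sum $M(m,n,C)$, and you assert that its proof ``carries over verbatim'' because the twist only adds rim-torus components. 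That assertion is false as stated. The crucial input to the uniqueness theorem is that basic classes have no rim-torus component, which is what makes the left-hand side of the Morgan-Szab\'o-Taubes formula (Theorem \ref{thm MST}) a single term. In $M(n)$ this is deduced from the adjunction inequality only because the vanishing surfaces (the classes dual to the rim tori) are linear combinations of embedded vanishing spheres and rim tori, which can be smoothed into square-zero tori. The paper explicitly notes in Section \ref{sect invariants} that for the twisted sums $M(m,n,C)$ the vanishing surfaces are in general \emph{no longer} linear combinations of vanishing spheres and rim tori, so this argument does not transfer, and the single-summand property of the formula --- hence the uniqueness of the basic class --- is not established for $M(m,n,C)$.

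The gap is repairable, and the repair is precisely how the paper phrases the step: run the comparison in one direction only, so that uniqueness is never needed on the twisted side. The canonical class $K_{M(m,n,C)}$ is a Seiberg-Witten basic class by Taubes \cite{T} and has intersection $2g-2$ with the symplectic fibre $\Sigma_{M(m,n,C)}$; since your diffeomorphism is the identity on $M(m)_0$, it matches the fibre classes, so the image of $K_{M(m,n,C)}$ is a basic class of $M(m+n)$ with intersection $2g-2$ with $\Sigma_{M(m+n)}$. The uniqueness theorem is then applied only to the untwisted manifold $M(m+n)$, where it is proved, forcing this image to be $K_{M(m+n)}$ and giving the equality of divisibilities. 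No knowledge of the basic classes of $M(m,n,C)$ beyond Taubes' theorem is required. With this one-directional argument substituted for your second paragraph, the remainder of your proof is correct and agrees with the paper's.
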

\begin{proof}
Suppose that $\psi$ extends to a self-diffeomorphism $\Psi$ of the complement. We can use $\psi$ to form a twisted fibre sum $M(m,n,C)$ for any integer $m\geq 1$. Since $\psi$ extends on the complement there is an orientation preserving diffeomorphism $M(m,n,C)\cong M(m+n)$. Let $X'=M(m,n,C)$ and $X=M(m+n)$. Note that the diffeomorphism is the identity on $M(m)_0$. Hence it maps the surface $\Sigma_{X'}$ in $X'$, given by the standard fibre in the boundary of the tubular neighbourhood in $M(m)$, to the standard fibre $\Sigma_X$ in $M(m+n)$. On the $M(n)_0$ side in $X'$, the surface $\Sigma_{X'}$ is a twisted copy of the standard fibre and also gets mapped to the fibre $\Sigma_X$ in $X$. The canonical class of $X'$ has intersection $2g-2$ with the symplectic surface $\Sigma_{X'}$. It has to map under the diffeomorphism to a Seiberg-Witten basic class of $X$ having the same intersection $2g-2$ with the fibre $\Sigma_X$, since the surfaces get identified and the canonical class is basic \cite{T}. There is a unique such basic class, the canonical class of $X$. It follows that both canonical classes have to match under the diffeomorphism, in particular they must have the same divisibility. This will imply that $d$ divides $a(n-1)$.

The divisibility of the canonical class of $X$ is the greatest common divisor of
\begin{equation*}
m+n-2\text{ and } d.
\end{equation*}
The divisibility of the canonical class of $X'$ is the greatest common divisor of
\begin{equation*}
m+n-2, d\text{ and }a((2g-1)n-1).
\end{equation*}
If both are the same, the greatest common divisor of $m+n-2$ and $d$ must divide $a((2g-1)n-1)$. We can choose for $m$ any positive integer. In particular, we can arrange that the greatest common divisor of $m+n-2$ and $d$ is equal to $d$. Hence $d$ has to divide $a((2g-1)n-1)$. The integer $d$ is the divisibility of $K_{M'}+\Sigma_{M'}$. The adjunction formula shows that $d$ divides $2g-2$. It follows that $d$ divides $a((2g-1)n-1)$ if and only if it divides $a(n-1)$. This completes the proof. 
\end{proof}
\begin{rem}
The corresponding statement is true for elliptic surfaces and shows that if a diffeomorphism extends over $E(n)\setminus\text{int}\,\nu\Sigma$ then $d$ divides $a(n-1)$. Note that in the case of $E(n)$ the integer $d$ is zero, see Remark \ref{rem elliptic KM+SigmaM}. This implies that we must have either $n=1$, and we are in the case of $E(1)$, or $a=0$ and hence $C=0$, which is the case of a trivial diffeomorphism that preserves the torus fibration. Hence we get the same obstruction that is known for elliptic surfaces, see \cite[Theorem 8.3.11]{GS}.
\end{rem}

\section{Construction of some examples}

Let $M'$ be a minimal complex surface of general type. Then $M'$ is algebraic and an embedding into some projective space $\mathbb{CP}^N$ is determined by a very ample line bundle $E$ on $M'$. Under such an embedding, a transverse hyperplane section of $M'$ will be a surface $\Sigma_{M'}$ on $M'$, representing the Poincar\'e dual of $c_1(E)$. We want to prove the following:
\begin{prop}\label{existence of emb overline{M} div by d}
For each integer $d\geq 1$ there exists an embedding of $M'$ into some projective space $\mathbb{CP}^N$ such that the class $K_{M'}+\Sigma_{M'}$ is divisible by $d$.
\end{prop}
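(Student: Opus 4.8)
The plan is to obtain the embedding from a power-and-twist of a fixed very ample line bundle, reducing the whole statement to a congruence in cohomology that the construction makes transparent. Recall that an embedding $M'\hookrightarrow\mathbb{CP}^N$ corresponds to a very ample line bundle $E$, and that the resulting hyperplane section $\Sigma_{M'}$ is Poincar\'e dual to $c_1(E)$. Writing $c_1(K_{M'})$ for the first Chern class of the canonical bundle, the class $K_{M'}+\Sigma_{M'}$ is Poincar\'e dual to $c_1(K_{M'})+c_1(E)$. Thus it suffices to produce a very ample line bundle $E$ with
\[
c_1(K_{M'})+c_1(E)\equiv 0 \pmod d \quad\text{in } H^2(M';\mathbb{Z}).
\]

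First I would fix any very ample line bundle $L$ on $M'$ (one exists since $M'$ is a smooth projective surface) and set
\[
E=K_{M'}^{\otimes(d-1)}\otimes L^{\otimes ds}
\]
for an integer $s\geq1$ to be chosen large. For this choice the divisibility is automatic and independent of $s$:
\[
c_1(K_{M'})+c_1(E)=d\,c_1(K_{M'})+ds\,c_1(L)=d\bigl(c_1(K_{M'})+s\,c_1(L)\bigr),
\]
so $K_{M'}+\Sigma_{M'}$ is $d$ times an integral class. The case $d=1$ is then trivial, with $E=L^{\otimes s}$.

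The only substantial point is to arrange that $E$ is very ample, and this is where the size of $s$ enters. I would invoke the standard consequence of Serre vanishing that for a fixed coherent sheaf $\mathcal{F}$ and an ample line bundle $L$ the twist $\mathcal{F}\otimes L^{\otimes m}$ is globally generated for all $m\gg0$. Applied to $\mathcal{F}=K_{M'}^{\otimes(d-1)}$, this shows that $K_{M'}^{\otimes(d-1)}\otimes L^{\otimes(ds-1)}$ is globally generated once $s$ is large enough, and then
\[
E=L\otimes\bigl(K_{M'}^{\otimes(d-1)}\otimes L^{\otimes(ds-1)}\bigr)
\]
is very ample, being the tensor product of the very ample bundle $L$ with a globally generated bundle.

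The main obstacle is the bookkeeping that makes the two requirements compatible: the exponent of $L$ must stay divisible by $d$ to preserve the congruence, yet must be large to force very ampleness. This is exactly achieved by the splitting $ds=1+(ds-1)$, which peels off a single very ample factor $L$ and leaves a high power that Serre's theorem renders globally generated. If one prefers, minimality of $M'$ gives that $K_{M'}$ is nef --- the inequality already used in the proof of Lemma \ref{lem genus general type at least 2} --- so that $E$ is at least ample, after which one could instead quote a very-ampleness criterion such as Reider's; but the global generation route delivers very ampleness at once.
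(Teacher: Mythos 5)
Your proof is correct, and it takes a genuinely different route from the paper's. The paper works with $S=K_{M'}+sL$: a preliminary lemma shows $S$ is ample for all large $s$ via the Nakai--Moishezon criterion, and this is exactly where minimality and general type are used (through $K_{M'}\cdot C\geq 0$ for every irreducible curve); then $kS$ is very ample for $k\gg 0$, the hyperplane class is $kK_{M'}+ksL$, so $K_{M'}+\Sigma_{M'}=(k+1)K_{M'}+ksL$, and divisibility is arranged by a two-parameter congruence, choosing $d\mid s$ and then $d\mid k+1$. You instead hard-wire the congruence into the embedding bundle, $E=K_{M'}^{\otimes(d-1)}\otimes L^{\otimes ds}$, so that $c_1(K_{M'})+c_1(E)=d\bigl(c_1(K_{M'})+s\,c_1(L)\bigr)$ identically, and you certify very ampleness with no positivity assumption on $K_{M'}$ at all: Serre's theorem, applied to $K_{M'}^{\otimes(d-1)}$ as a mere coherent sheaf, makes $K_{M'}^{\otimes(d-1)}\otimes L^{\otimes(ds-1)}$ globally generated for $s\gg 0$, and the tensor product of a very ample bundle with a globally generated one is very ample (Hartshorne, Exercise II.7.5(d)). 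Both ingredients are standard and correctly applied, so the argument is complete. The trade-off: your route is more general and more economical --- it proves the proposition for an arbitrary smooth projective surface, tuning only the single parameter $s$ --- whereas the paper's route stays within the ample/very-ample calculus adapted to surfaces of general type, exhibiting the hyperplane bundle as a power of the ample bundle $K_{M'}+sL$, at the cost of actually needing the minimality and general-type hypotheses. Your closing alternative via Reider's criterion is dispensable (and would require checking its exceptional curve cases); the global-generation argument you actually give is the clean one.
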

Using this proposition we can construct many Lefschetz fibrations $M(n)$ such that certain diffeomorphisms on the boundary of the tubular neighbourhood of a general fibre do not extend over the complement according to Theorem \ref{obstruction diffeom}.

We need some preparations: If $S$ is an ample line bundle on $M'$, then $kS$ is very ample for all $k\geq k_0$ for some integer $k_0$ \cite[Theorem 1.2.6]{Laz}. Let $L$ be an ample line bundle on $M'$. We want to determine when the line bundle of the form $S=K_{M'}+sL$ for integers $s\geq 1$ is ample. 
\begin{lem} Let $L$ be an ample line bundle on $M'$. Then there exists an integer $s_0 \geq 1$ such that $S=K_{M'}+sL$ is ample for all $s\geq s_0$.
\end{lem}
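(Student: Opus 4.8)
The plan is to verify ampleness of $S=K_{M'}+sL$ directly by means of the Nakai--Moishezon criterion: on the projective surface $M'$ a divisor $D$ is ample if and only if $D^2>0$ and $D\cdot C>0$ for every irreducible curve $C\subset M'$. The one structural input I would exploit is already in hand, namely that $M'$ is minimal of general type, so $K_{M'}\cdot C\geq 0$ for every irreducible curve $C$ by \cite[Chapter VII, Corollary 2.3]{BHPV} --- the same fact used in the proof of Lemma \ref{lem genus general type at least 2}. In other words $K_{M'}$ is nef. Ampleness of $L$ supplies the complementary positivity $L\cdot C>0$ for every irreducible curve $C$, together with $L^2>0$.

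First I would dispose of the curve condition. For an arbitrary irreducible curve $C$ one has $(K_{M'}+sL)\cdot C=K_{M'}\cdot C+s\,(L\cdot C)\geq s\,(L\cdot C)>0$ already for every $s\geq 1$, since the first summand is $\geq 0$ and $L\cdot C\geq 1$. Thus positivity against curves holds uniformly in $s$ and forces no enlargement of $s_0$. This is the step where nefness of $K_{M'}$ does the essential work: it replaces an a priori infinite family of inequalities, one for each curve, by a single sign observation.

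It remains to arrange $S^2>0$. Expanding gives $(K_{M'}+sL)^2=L^2 s^2+2(K_{M'}\cdot L)\,s+K_{M'}^2$, a quadratic in $s$ whose leading coefficient $L^2$ is positive, so it is positive for all sufficiently large $s$. Its vertex sits at $s=-(K_{M'}\cdot L)/L^2\leq 0$, because $K_{M'}\cdot L\geq 0$ by nefness; hence once the quadratic is positive at some $s_0\geq 1$ it remains positive for all $s\geq s_0$. Fixing such an $s_0$ and combining with the curve condition, the Nakai--Moishezon criterion yields that $S$ is ample for every $s\geq s_0$.

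I do not expect a genuine obstacle; the only point that needs care is the uniformity over the infinitely many curves $C$, and this is settled cleanly by the nefness of $K_{M'}$. In fact the same estimates show that a nef class plus an ample class is again ample, so one could even take $s_0=1$: from $K_{M'}^2\geq 0$ and $K_{M'}\cdot L\geq 0$ one gets $(K_{M'}+sL)^2\geq(sL)^2>0$ for all $s\geq 1$. I would nonetheless state the lemma with a general $s_0$, as that is all the construction in the sequel requires.
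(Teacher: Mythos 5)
Your proof is correct and follows essentially the same route as the paper: the Nakai--Moishezon criterion, with $K_{M'}\cdot C\geq 0$ (nefness, from minimality and general type) handling the curve condition uniformly in $s$, and positivity of the quadratic $s^2L^2+2s\,K_{M'}\cdot L+K_{M'}^2$ for large $s$ handling the self-intersection condition. Your additional observations --- that the vertex location makes ampleness persist for all $s\geq s_0$, and that $K_{M'}^2\geq 0$ and $K_{M'}\cdot L\geq 0$ would even allow $s_0=1$ --- are valid refinements (resting on the standard facts that a nef class has non-negative square and non-negative product with an ample class) but do not change the argument in substance.
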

\begin{proof}
According to the Nakai-Moishezon criterion, the line bundle $S$ is ample if and only if $S^2>0$ and $SC>0$ for every irreducible curve $C$ on $M'$; see \cite[Chapter IV, Corollary 6.4]{BHPV} and \cite[Chapter V, Theorem 1.10]{H}. The canonical class of an algebraic surface of general type satisfies $K_{M'}C\geq 0$ for all irreducible curves $C$, with equality if and only if $C$ is a rational $(-2)$-curve. Since we assumed that $L$ is ample, we have $SC>0$ for all integers $s\geq 1$ and every curve $C$. Moreover, since $L$ is ample we also have $L^2>0$. Hence the second condition $S^2=K_{M'}^2+2sK_{M'}L+s^2L^2>0$ will certainly hold if we choose $s$ large enough.
\end{proof}
We can now prove Proposition \ref{existence of emb overline{M} div by d}.
\begin{proof}
Let $S=K_{M'}+sL$ be ample and $kS$ very ample. Then the hyperplane section $\Sigma_{M'}$ under the embedding determined by $kS$ into a projective space $\mathbb{CP}^N$ represents the class $kK_{M'}+ksL$. Let $d\geq 1$ be some integer. We want to choose $s$ and $k$ such that the class $K_{M'}+\Sigma_{M'}$ is divisible by $d$. First choose $s$ such that $S=K_{M'}+sL$ is ample and $d$ divides $s$. Then choose $k$ large enough such that $kS$ is very ample and $d$ divides $k+1$. Then $d$ also divides the class $K_{M'}+\Sigma_{M'}$.
\end{proof}

\bibliographystyle{amsplain}

\begin{thebibliography}{999}

\bibitem{AF} A.~Andreotti, T.~Frankel, {\em The second Lefschetz theorem on hyperplane sections}, in: Global Analysis (Papers in Honor of K. Kodaira), 1--20 Univ.~Tokyo Press, Tokyo 1969. 

\bibitem{BHPV} W.~P.~Barth, K.~Hulek, C.~A.~M.~Peters, A.~Van de Ven, {\sl Compact complex surfaces}, Second edition, Ergebnisse der Mathematik und ihrer Grenzgebiete. 3.~Folge, Springer-Verlag, Berlin, 2004.

\bibitem{FS} R.~Fintushel, R.~J.~Stern, {\em Immersed spheres in 4-manifolds and the immersed Thom conjecture}, Turkish J.~Math.~{\bf 19}, 145--157 (1995).

\bibitem{FS2} R.~Fintushel, R.~J.~Stern, {\em Families of simply connected 4-manifolds with the same Seiberg-Witten invariants}, Topology {\bf 43}, 1449--1467 (2004).

\bibitem{Go} R.~E.~Gompf, {\em A new construction of symplectic manifolds}, 
Ann.~of Math.~{\bf 142}, 527--595 (1995).

\bibitem{GS} R.~E.~Gompf, A.~I.~Stipsicz, {\sl $4$-manifolds and Kirby calculus}, Graduate Studies in Mathematics, 20. Providence, Rhode Island. American Mathematical Society 1999.

\bibitem{MHthesis} M.~J.~D.~Hamilton, {\em On symplectic 4-manifolds and contact 5-manifolds}, Ph.~D.~Thesis, Munich 2008, http://edoc.ub.uni-muenchen.de/8779/

\bibitem{MH} M.~J.~D.~Hamilton, {\em Generalized fibre sums of 4-manifolds and the canonical class}, preprint arXiv:0907.2671v3. 

\bibitem{H} R.~Hartshorne, {\sl Algebraic Geometry}, Graduate Texts in Mathematics 52, Springer-Verlag, New York-Heidelberg, 1977.

\bibitem{La} K.~Lamotke, {\em The topology of complex projective varieties after S. Lefschetz}, Topology {\bf 20}, 15--51 (1981).

\bibitem{Laz} R.~Lazarsfeld, {\sl Positivity in algebraic geometry. I.~Classical setting: line bundles and linear series}, Ergebnisse der Mathematik und ihrer Grenzgebiete. 3.~Folge, Springer-Verlag, Berlin-Heidelberg, 2004.

\bibitem{McW} J.~D.~McCarthy, J.~G.~Wolfson, {\em Symplectic normal connect sum}, Topology {\bf 33}, 729--764 (1994). 

\bibitem{MST} J.~Morgan, Z.~Szab\'o, C.~H.~Taubes, {\em A product formula for the Seiberg-Witten invariants and the generalized Thom conjecture}, J.~Differential Geom.~{\bf 44}, 706--788 (1996).

\bibitem{MuW} V.~Mu\~noz, B.-L.~Wang, {\em Seiberg-Witten-Floer homology of a surface times a circle for non-torsion $\text{spin}^\mathbb{C}$ structures}, Math.~Nachr.~{\bf 278}, 844--863 (2005).

\bibitem{T} C.~H.~Taubes, {\em The Seiberg-Witten invariants and symplectic forms}, Math.~Res.~Lett.~{\bf 1}, no.~6, 809--822 (1994).

\bibitem{W} E.~Witten, {\em Monopoles and four-manifolds}, Math.~Res.~Lett.~{\bf 1}, no.~6, 769--796 (1994).

\end{thebibliography}

\bigskip
\bigskip

\end{document}